\theoremstyle{plain}
\newtheorem*{Thm-A}{Theorem A}
\newtheorem*{Thm-B}{Theorem B}
\newtheorem{thm}{Theorem}[section]
\newtheorem{cor}[thm]{Corollary}
\newtheorem{lem}[thm]{Lemma}
\newtheorem{prop}[thm]{Proposition}
\theoremstyle{definition}
\newtheorem{defn}[thm]{Definition}
\newtheorem{rem}[thm]{\bf Remark}
\newtheorem{exm}[thm]{Example}
\numberwithin{equation}{section}
\def\Hom{{\rm Hom}}
\def\End{{\rm End}}
\def\Ext{{\rm Ext}}
\def\co{{\mathcal O}}
\def\coh{{\rm coh}\mbox{-}}
\def\vect{{\rm vect}\mbox{-}}
\def\bbY{{\mathbb Y}}
\def\bbX{{\mathbb X}}
\def\bbL{{\mathbb L}}
\def\bbZ{{\mathbb Z}}
\begin{document}
\title[Tilting Objects via Recollements and $p$-Cycles on Weighted Projective Lines]{ Tilting Objects via Recollements and $p$-Cycles on Weighted Projective Lines}

\author[Q. Dong, H.X. Zhang] {Qiang Dong, Hongxia Zhang$^*$}

\thanks{$^*$ the corresponding author}

\dedicatory{}
\commby{}

\begin{abstract}
In this paper, we provide a new method for constructing tilting objects in a triangulated category via recollements. The $p$-cycle approach to exceptional curve processes  significant advantages in constructing recollements and ladders, due to the existence of reduction/insertion functors. In order to construct tilting objects in the stable category of vector bundles over a weighted projective line, we give explicit expressions for line bundles and extension bundles due to the $p$-cycles constuctions. Furthermore, we provide an essential proof for tilting cuboic object and tilting objects consisting of Auslander bundles. Moreover, we construct certain new tilting objects in the stable category of vector bundles over a weighted projective line.
\end{abstract}

\maketitle
\section{Introduction}

Geigle and Lenzing introduced the notion of weighted projective lines in \cite{[GL1]}, which provide a geometric approach to canonical algebras in the sense of Ringel \cite{[Rin]}. The subcategory $\textup{vect}\mbox{-}\mathbb{X}$ of vector bundles over a weighted projective line $\mathbb{X}$ with a distinguished exact structure is a Frobenius category whose indecomposable projective-injective objects are just all line bundles. It is well known that the induced stable category $\underline{\textup{vect}}\mbox{-}\mathbb{X}$ is a triangulated category by \cite{[Hap1988]}. Recently, many tilting objects in this triangulated category have been found; see \cite{CLLR,CLR2019,[DR],[KLM1],[L1]}. Accordingly, the study of the tilting theory in the stable category $\underline{\textup{vect}}\mbox{-}\mathbb{X}$ has a high contact with many mathematical subjects. Among the many related subjects we mention Nakayama algebra \cite{[DLR],[LMR]}, singularity theory\cite{[EP], [L1]}, monomorphism category \cite{[KLM2],[RS],[Simson2015]}, mirror symmetry\cite{[ET]}
and recollement theory\cite{[LiuLu]}.

Recollements of abelian and triangulated categories were introduced by Be\u{\i}linson, Bernstein, and Deligne \cite{[BBD]} in connection with derived categories of sheaves on topological spaces with the idea
that one triangulated category may be ¡°glued together¡± from two others.
Recollements play important roles in various branches of mathematics. For example, recollements of triangulated categories are used to stratify derived categories of sheaves by Cline-Parshall-Scott in \cite{[CPS2]} and to study highest weight categories in \cite{[CPS1],Krause2017HWC}. Meanwhile, recollements of abelian categories have also been investigated  extensively in various contexts; see \cite{[CK],[GKP],[P]}.

The concept of recollements was generalized to the idea of ladders by Beilinson-Ginsburg-Schechtman in \cite{[BGS]}, which have been used to study the derived simplicity of the derived module categories (see \cite{[AKLY]}) and the compactly generated triangulated categories (see \cite{[BDS],[GP]}). J{\o}rgensen showed that a recollement of a triangulated category with Serre duality can be extended to a ladder by using the reflecting approach, see \cite{[J]}.
For the stable category of vector bundles $\underline{\textup{vect}}\mbox{-}\mathbb{X}$, Ruan \cite{[R]}  constructed ladders by means of reduction and insertion functors arising from the $p$-cycle construction on weighted projective lines, c.f. \cite{[C]}.

Note that a recollement can be regarded as a short exact sequence (or a semi-orthogonal decomposition) of either a triangulated category or an abelian category. It offers a powerful method for investigating the `middle' large category by studying its two associated smaller ones. Gluing techniques have been widely employed in the literature to construct
torsion pairs, tilting objects, silting objects, t-structures and co-t-structures, see \cite{[AKL], [BBD], [Bo], [La], [LV], [LVY], MH, [MZ]}. The main purpose of this paper is to construct tilting objects in the stable category of vector bundles $\underline{\textup{vect}}\mbox{-}\mathbb{X}$ via recollements and ladders, due to the explicit description on the associated functors via the $p$-cycle construction.

The following is our main
theorem concerning tilting (see Theorem \ref{tilting in recollement}).

\begin{Thm-A}[\textup{see Theorem \ref{tilting in recollement}}]
Let \begin{equation}
\xymatrix@C=1cm{
\mathcal{C}'\ar[rrr]|{i_*=i_!}
&&& \mathcal{C}
\ar@/_1.5pc/[lll]|{i^*}
\ar@/^1.5pc/[lll]|{i^!}
\ar[rrr]|{j^!=j^*}
&&& \mathcal{C}''
\ar@/_1.5pc/[lll]|{j_!}
\ar@/^1.5pc/[lll]|{j_*}
}\end{equation}
be a recollement of triangulated categories. Assume $j_{*}$ admits a right adjoint $j^\sharp$.
Assume $T'$ and $T''$ are tilting objects in $\mathcal{C}'$ and $\mathcal{C}''$, respectively. Then the following statements are equivalent:
\begin{enumerate}
\item[(1)] $T=i_*(T') \oplus j_*(T'')$ is a tilting object in $\mathcal{C}$.
\item[(2)] $\Hom_{\mathcal{C}''}(T'', j^\sharp i_*(T')[n])=0$ for all nonzero integers $n\neq 0$.

\item[(3)] $\Hom_{\mathcal{C}'}(i^*j_*(T''), T'[n])=0$ for all nonzero integers $n\neq 0$.
\end{enumerate}
\end{Thm-A}

In order to apply Theorem A to construct tilting objects in $\underline{\textup{vect}}\mbox{-}\mathbb{X}$, we provide explicit $p$-cycle interpretation for line bundles and extension bundles (see Propositions \ref{$p$-cycle interpretation for line bundles} and \ref{p cycle interpretation for extension bundle}), and describe the actions of the reduction and insertion functors on these bundles with closed formulas (see Lemma \ref{ActOnL}, Propositions \ref{element reduction functor} and \ref{extension bundles insertion functor}). Combining with Theorem A with
the ladder given in \cite[Theorem 4.9]{[R]}, we develop new proofs for certain known tilting cuboic object and tilting objects consisting of Auslander bundles (see Propositions \ref{CubTilting} and  \ref{DRTilting}). Moreover, we construct  new tilting objects in $\underline{\vect}\bbX(2, p_2, p_3)$, see the following main theorem.

\begin{Thm-B}[\textup{see Theorem \ref{newtiltingobject}}]
Let $q$ be an integer with $1\leq q\leq p_3-2$. For $k=1,2$, we denote by
$$T_{1k}=\bigoplus_{i=0}^{q-1}\bigoplus_{j=0}^{p_2-2}E(i\overline{x}_k+j\overline{x}_3-\vec{x}_3+\vec{c})$$
and
$$T_{2k}=\bigoplus_{j=0}^{p_2-2}[E\langle q\vec{x}_3\rangle\big(j\overline{x}_3-(q+1)\vec{x}_3+\vec{c}\big)\bigoplus\bigoplus_{i=1}^{p_3-q-2}E\big(i\overline{x}_k+j\overline{x}_3-(q+1)\vec{x}_3+\vec{c}\big)]$$
in $\underline{\vect}\bbX(2, p_2, p_3)$. Then
$T_{1i}\oplus T_{2j}, 1\leq i, j\leq 2$ are tilting objects in  $\underline{\vect}\bbX(2, p_2, p_3)$.
\end{Thm-B}

This paper is structured as follows.
In the next section we recall the definition of the $p$-cycle construction for an exceptional curve. In section 3,
we give the $p$-cycle interpretation for line bundles and extension bundles over a weighted projective line. In Section 4, we review two important functors---reduction functor and insertion functor, and describe how they act on line bundles and extension bundles. In Sections 5 and 6, we give an effective method to construct tilting objects in a triangulated category, see Theorem \ref{tilting in recollement}, and realize the tilting objects in $\underline{\vect}\bbX(2, p_2, p_3)$ by our method, which are constructed in \cite{[DR],[KLM1]}. As an application, we provide the certain new tilting objects in $\underline{\vect}\bbX(2, p_2, p_3)$, see Theorem \ref{newtiltingobject}.

\section{Preliminary}

In this section, we introduce the category of coherent sheaves on an exceptional curve by using $p$-cycle construction due to Lenzing \cite{[L]}; see also \cite{[K],[Kussin]}. Throughout this paper we fix an arbitrary field $\mathbf{k}$, and denote by $D:=\Hom_{\mathbf{k}}(-, \mathbf{k})$.

\subsection{Exceptional curve}
In this subsection we collect some facts about exceptional curves from \cite[Section 2.5]{[L]}. For further investigation on the related topics we refer to \cite{[Kussin],[K],[Ld],[LR],[KM]}.

An object $E$ in a small abelian $\mathbf{k}$-category is called \emph{exceptional} if the endomorphism algebra of
$E$ is a skew field and moreover $E$ has no self-extensions, i.e. $\Ext^n(E, E) = 0$ for all $n>0$.

An \emph{exceptional curve} $\bbX$ over $\mathbf{k}$ is defined by the following requests on its associated
category $\mathcal{H}=\coh\bbX$ of coherent sheaves:

\begin{itemize}
  \item[(1)] $\mathcal{H}$ is a connected small abelian $\mathbf{k}$-category with morphism spaces that are finite dimensional
  over $\mathbf{k}$;
  \item[(2)] $\mathcal{H}$ is hereditary and noetherian, and there exists an equivalence $\tau:\mathcal{H}\to\mathcal{H}$
  such that Serre duality $D\Ext_\mathcal{H}^1(X,Y)\cong\Hom_\mathcal{H}(Y,\tau X)$ holds;
  \item[(3)] $\mathcal{H}$ admits a complete exceptional sequence $(E_1, . . . , E_n)$, i.e., each $E_i$ is exceptional and $\Hom(E_j, E_i) = 0=\Ext^1(E_j, E_i)$ for all $j>i$, and $E_1, . . . , E_n$ generate the bounded derived category of $\mathcal{H}$ as a triangulated category.
\end{itemize}

In case $\mathbf{k}$ is algebraically closed,
by \cite{[L3]} the exceptional curves are just the weighted projective lines.

Each indecomposable object in $\mathcal{H}=\coh\bbX$ is either a vector bundle or a torsion sheaf. The torsion subcategory $\mathcal{H}_0$ of $\mathcal{H}$ has a decomposition $\mathcal{H}_0=\coprod_{x\in \bbX}\mathcal{U}_x$,
where $\mathcal{U}_x$ is consisting of torsion sheaves concentrated in $x$.
Let $\mathcal{S}_x$ be the semisimple subcategory of $\mathcal{U}_x$. Each point $x\in\bbX$ determines
(by means of a mutation with respect to $\mathcal{S}_x$) a self-equivalence functor $\sigma_x: \mathcal{H}\to \mathcal{H}; \; E\mapsto E(x)$,
together with a natural transformation $x: \textup{Id}\to \sigma_x$, also denoted by the symbol $x$.
For each coherent sheaf $E$ with $\Hom_{\mathcal{H}}(\mathcal{U}_x, E)=0$, these data are given by the $\mathcal{S}_x$-universal extension:
\begin{equation}\label{universal extension}\xymatrix{0\ar[r]& E\ar[r]^{x_E}& E(x)\ar[r]& E_x \ar[r]& 0,}
\end{equation}
where
$E_x=\bigoplus_{S\in\mathcal{S}_x} \Ext^1(S, E)\otimes_{\End(S)} S$ belongs to $\mathcal{S}_x$, cf. \cite[Section 2.5]{[K]}. For an indecomposable torsion sheaf $E$, $\sigma_x$ acts as follows:
if $E\in\mathcal{U}_y$ for some $y\neq x$, then we have $E(x)=E$ and $x_E={\rm id}_E$ by the $\mathcal{S}_x$-universal extension (\ref{universal extension}) as above; if $E\in\mathcal{U}_x$, then $E(x)=\tau^{-1}E$
and the kernel of $x_E: E\to E(x)$ equals the simple socle of $E$.

\subsection{\texorpdfstring{$p$}{$p$}-cycle construction}

Fix a point $x$ of $\mathbb{X}$ and an integer $p\geq1$. A \emph{$p$-cycle} $E$ concentrated in $x$ is a diagram
$$\xymatrix{
\cdots\ar[r] & E_{n}\ar^{x_{n}}[r] &  E_{n+1}\ar^{x_{n+1}}[r] &\cdots \ar[r] & E_{n+p}\ar^{x_{n+p}}[r]&\cdots},$$
which is $p$-periodic in the sense that $E_{n+p}=E_{n}(x)$, $x_{n+p}=x_{n}(x)$, and $x_{n+p-1}\cdots x_{n+1} x_{n}=x_{E_{n}}$ holds for each integer $n$. A morphism $u: E\to F$ of $p$-cycles concentrated in the same point $x$ is a sequence of morphism $u_n: E_n\to F_n$ which is $p$-periodic, i.e, $u_{n+p}=u_{n}(x)$ for each $n$ and such that each diagram
$$\xymatrix{
E_n\ar[r]^{x_n}\ar[d]^{u_n} &E_{n+1}\ar[d]^{u_{n+1}}\\
F_n\ar[r]^{x_n}& F_{n+1}
}$$
commutes.
We denote a $p$-cycle in the form
\begin{equation}\label{p-cycle}\xymatrix{
E_0\ar[r]^{x_0} & E_{1}\ar[r]^{x_{1}} & \cdots \ar[r]^{x_{p-2}}&  E_{p-1}\ar[r]^{x_{p-1}} & E_{0}(x),
}\end{equation}
and denote the category of all $p$-cycles concentrated in $x$ by $\mathcal{H}(p;x)$.

According to \cite[Section 4]{[L]}, the category $\mathcal{H}(p;x)$ is connected, abelian and noetherian, where exactness and formation of kernels and cokernels have a pointwise interpretation. Moreover, it is equivalent to the category of coherent sheaves on an exceptional curve which admits Serre duality.
Note that any functor $\sigma$ on $\mathcal{H}$ induces a functor on $\mathcal{H}(p;x)$ via pointwise actions, which will be still denoted by $\sigma$.
Moreover, $\mathcal{H}(p;x)$ is again equipped with a natural shift automorphism $\overline{\sigma}_{x}$ satisfying $\overline{\sigma}_{x}^p=\sigma_x$ (see \cite[Section 6.1.5]{[Kussin]}),
which sends a $p$-cycle of the form (\ref{p-cycle}) to the following:
\begin{equation}\label{natural shift automorphism}\xymatrix{
 E_{1}\ar[r]^{x_{1}} & E_2\ar[r]^{x_2} &\cdots \ar[r]^{x_{p-1}}&  E_{0}(x)
\ar^{x_{0}(x)}[r]&  E_{1}(x).}\end{equation}

Let $E$ be a $p$-cycle in $\mathcal{H}(p;x)$ of the form (\ref{p-cycle}).
If $E$ is a vector bundle, then each $x_j$ is a monomorphism since each $x_{E_j}: E_j\to E_j(x)$ is a monomorphism by (\ref{universal extension}).

There is a full exact embedding
\begin{equation}\label{embedding}
\iota: {\mathcal{H}}\to \mathcal{H}(p;x); \quad E\mapsto (E=\cdots =E\to E(x)).
\end{equation}
We will identify $\mathcal{H}$ with the resulting exact subcategory of $\mathcal{H}(p;x)$.
We note that the inclusion $\iota: {\mathcal{H}}\to \mathcal{H}(p;x)$ has a left adjoint $l$ and a right adjoint $r$ which are both exact
functors and are given by
$${l}\xymatrix{
\big(E_0\ar[r]^{x_0} & E_{1}\ar[r]^{x_{1}} & \cdots \ar[r]^{x_{p-2}}&  E_{p-1}\ar[r]^{x_{p-1}} & E_{0}(x)\big)
}=E_{p-1}$$
and
$$r \xymatrix{
\big(E_0\ar[r]^{x_0} & E_{1}\ar[r]^{x_{1}} & \cdots \ar[r]^{x_{p-2}}&  E_{p-1}\ar[r]^{x_{p-1}} & E_{0}(x)\big)
}=E_{0}.$$

\subsection{Simple sheaves in \texorpdfstring{$\mathcal{H}(p;x)$}{$\mathcal{H}(p;x)$}}\label{section of simples}

The simple objects in $\mathcal{H}(p;x)$ occur in two types:

\begin{itemize}
  \item [(1)] the simple objects of $\mathcal{H}$ which are concentrated in a point $y$ different from $x$;
  \item [(2)] for each simple object $S$ from $\mathcal{H}$ which is concentrated in $x$, the $p$ simples:
  \begin{itemize}
  \item[] \qquad $S_1:\; 0\to 0\to \cdots \to 0\to S\to 0;$
  \item[] \qquad $S_2:\; 0\to 0\to \cdots \to S \to 0\to 0;$
  \item[] \qquad $\qquad\qquad\qquad\cdots\cdots$
  \item[] \qquad $S_{p-1}:\; 0\to S\to \cdots \to 0 \to 0\to 0;$
  \item[] \qquad $S_{p}: \;S\to 0\to \cdots \to 0 \to 0\to S(x).$
\end{itemize}
\end{itemize}
Moreover, each $S_{j}\; (1\leq j\leq p)$ is exceptional and $\End_{\mathcal{H}(p;x)}(S_j)\cong\End_{\mathcal{H}}(S)$. Recall that $S(x)=\tau^{-1}S$
for any $S\in\mathcal{S}_x$. The non-zero extensions between the simples of type (2) are given by
$$\Ext_{\mathcal{H}(p;x)}^1(S_{j+1}, S_j)\cong  \Ext_{\mathcal{H}(p;x)}^1(S(x)_{1},S_p)\cong\End_{\mathcal{H}}(S),$$ where $1\leq j\leq p-1$.
Consequently, $\bar{\tau}^{-1}(S_{j})=S_{j+1}$ for $1\leq j\leq p-1$ and $\bar{\tau}^{-1}(S_{p})=S(x)_1$, where $\bar{\tau}$ is the Auslander--Reiten translation of $\mathcal{H}(p;x)$.
In particular, if there is a unique simple sheaf $S$ in $\mathcal{U}_x$, i.e, $S(x)=S$, then $\bar{\tau}(S_{j+1})=S_j$, where the indices are taken modulo $p$. In this case, the extension-closed subcategory generated by the simples of type (2) is a tube of rank $p$.

\section{\texorpdfstring{$p$}{$p$}-cycle construction for weighted projective lines}

We first recall the alternative description of weighted projective lines in the sense of Geigle-Lenzing \cite{[GL1]} by $p$-cycle construction due to Lenzing \cite{[L]}.

Let $\bbX_0$ be the usual projective line over $\mathbf{k}$, and let ${\boldsymbol\lambda}=(\lambda_1,\lambda_2,\cdots,\lambda_t)$ be a sequence of pairwise distinct points from $\bbX_0$ and ${\mathbf{p}}=(p_1, p_2,\cdots, p_t)$ be a sequence of positive integers. Associated to these data there is a weighted projective line $\bbX({\mathbf{p}}, {\boldsymbol\lambda})$ with the weight sequence ${\mathbf{p}}$ and parameter sequence ${\boldsymbol\lambda}$, we refer to \cite{[GL1]} for details. Denote by ${\rm coh}\mbox{-}\mathbb{X}(\mathbf{p}, {\boldsymbol\lambda})$ the category of coherent sheaves on the weighted projective line $\mathbb{X}(\mathbf{p}, {\boldsymbol\lambda})$.

Let inductively $\bbX_i$ be the exceptional curve obtained from $\bbX_{i-1}$ by inserting weight $p_i$ in $\lambda_{i}$, i.e, forming the category of $p_i$-cycles in $\coh\bbX_{i-1}$ which are concentrated in $\lambda_i$. Then $\bbX_t$ is isomorphic to the weighted projective line $\bbX(\mathbf{p}, {\boldsymbol\lambda})$ (this statement has been shown in \cite[Section 4]{[L]}, see also \cite[Section 6.2.2]{[Kussin]} and \cite{Hubery} for alternative proofs). In the following we identify $\bbX_t$ with $\bbX(\mathbf{p}, {\boldsymbol\lambda})$.

From this construction, we see that for any $1\leq i, i'\leq t$, if $i'\leq i-1$, then there is a unique simple sheaf in $\bbX_{i'}$ concentrated in $\lambda_i$, which will be denoted by $S_{\lambda_i}$; if $i'\geq i$, then there are $p_i$-simples in $\bbX_{i'}$ concentrated in $\lambda_i$, which will be denoted by $S_{\lambda_i,j}$'s, where the index $j\in\bbZ$ is taken modulo $p_i$.

\subsection{\texorpdfstring{$p$}{$p$}-cycle interpretation for line bundles}
Let $\mathbf{p}=(p_1, p_2, \cdots, p_t)$ be a sequence of integers.
The \emph{string group} $\bbL(\mathbf{p})$ of type $\mathbf{p}$ is an abelian group (written additively) on generators $\vec{x}_1, \vec{x}_2, \cdots, \vec{x}_t$, subject to the relations $p_1\vec{x}_1=p_2\vec{x}_2=\cdots = p_t\vec{x}_t$, where this common element is denoted by $\vec{c}$ and called the \emph{canonical element} of $\bbL(\mathbf{p})$. Each element $\vec{x}$ in $\bbL(\mathbf{p})$ can be uniquely written in \emph{normal form}
$$\vec{x}=\sum_{i=1}^t l_i\vec{x}_i+l\vec{c},
\notag$$
where $0\leq l_i\leq p_i-1$ for $1\leq i\leq t$ and $l\in \mathbb{Z}$. The \emph{dualizing element} $\vec{\omega}$ in $\bbL(\mathbf{p})$ is defined as $\vec{\omega}=(t-2)\vec{c}-\sum_{i=1}^t \vec{x}_i$.

Each line bundle $L$ in ${\rm coh}\mbox{-}\mathbb{X}(\mathbf{p}, {\boldsymbol\lambda})$ has the form $\co(\vec{x})$ for a uniquely defined $\vec{x}\in \bbL(\mathbf{p})$, see \cite{[L1]}. Now we recall some important short exact sequences in ${\rm coh}\mbox{-}\mathbb{X}(\mathbf{p}, {\boldsymbol\lambda})$. For an ordinary point
$\lambda\in \bbX_0\backslash \{\lambda_1,\lambda_2,\cdots,\lambda_t\}$, define $S_{\lambda}$ by the following exact sequence:
\begin{equation}\label{important short exact sequences1}\xymatrix{0\ar[r]& \co\ar[r]^{x_2^{p_2}-\lambda x_1^{p_1}}& \co({\vec{c}})\ar[r]& S_{\lambda}\ar[r]& 0}.
\end{equation}
For an exceptional point $\lambda_i$ and $j=1,2,\cdots,p_i,$ define $S_{i,j}$ by the following exact sequence:
\begin{equation}\label{important short exact sequences2}\xymatrix{0\ar[r]& \co((j-1)\vec{x}_i)\ar[r]^{\;\;\;x_i}& \co(j\vec{x}_i)\ar[r]& S_{i,j}\ar[r]& 0}.
\end{equation}

The following proposition gives the $p$-cycle interpretation for line bundles in ${\rm coh}\mbox{-}\mathbb{X}(\mathbf{p}, {\boldsymbol\lambda})$.

\begin{prop}\label{$p$-cycle interpretation for line bundles}
Assume that $\vec{y}$ has the normal form $\vec{y}=\sum_{i=1}^{t}k_i\vec{x}_i+k\vec{c}$.
Denote by $\vec{y}^{\prime}=\sum_{i=1}^{t-1}k_i\vec{x}_i+k\vec{c}$. Then $\co(\vec{y})\in {\rm coh}\mbox{-}\mathbb{X}(\mathbf{p}, {\boldsymbol\lambda})$ corresponds to the follwing $p_t$-cycle
$$\xymatrix@C=1.268cm{
\co(\vec{y}^{\prime})\ar@{=}[r]& \cdots\ar@{=}[r] &\co(\vec{y}^{\prime})\ar[r]^{y_{p_t-k_t-1}}& \co(\vec{y}^{\prime}+\vec{c})\ar@{=}[r]& \cdots\ar@{=}[r]&\co(\vec{y}^{\prime}+\vec{c}),}$$
in $\mathcal{H}(p_t; \lambda_t)$, where $\mathcal{H}={\rm coh}\mbox{-}\bbX(p_1, p_2, \cdots, p_{t-1};\lambda_1, \lambda_2, \cdots, \lambda_{t-1})$.
\end{prop}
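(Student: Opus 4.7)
The plan is to realize the line bundle $\co(\vec{y})$ inside the $p_t$-cycle category $\mathcal{H}(p_t;\lambda_t)$ with $\mathcal{H}=\coh\bbX_{t-1}$ by starting from the image $\iota(\co(\vec{y}'))$ under the exact embedding $\iota$ in (\ref{embedding}), and then applying the natural shift automorphism $\overline{\sigma}_{\lambda_t}$ exactly $k_t$ times to absorb the $k_t\vec{x}_t$ part of $\vec{y}$. Under the identification $\bbX_t\simeq\mathcal{H}(p_t;\lambda_t)$ from \cite[Section 4]{[L]} and \cite[Section 6.2.2]{[Kussin]}, the functor $\overline{\sigma}_{\lambda_t}$ acts on line bundles as grading shift by $\vec{x}_t$ in $\bbL(\mathbf{p})$, so the resulting cycle will represent $\co(\vec{y}'+k_t\vec{x}_t)=\co(\vec{y})$.

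First I would handle the base case $k_t=0$, in which $\vec{y}=\vec{y}'$ already lies in $\coh\bbX_{t-1}$. By the definition of $\iota$ in (\ref{embedding}), $\iota(\co(\vec{y}'))$ is the $p_t$-cycle with $\co(\vec{y}')$ in every position and last arrow the $\mathcal{S}_{\lambda_t}$-universal extension $\co(\vec{y}')\to\co(\vec{y}')(\lambda_t)$. Since $\lambda_t$ is an ordinary point of $\bbX_{t-1}$, the short exact sequence (\ref{important short exact sequences1}) identifies $\co(\vec{y}')(\lambda_t)$ with $\co(\vec{y}'+\vec{c})$, so the nontrivial transition arrow sits in position $p_t-1$, matching the stated form for $k_t=0$. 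For the inductive step, I would iterate the explicit formula (\ref{natural shift automorphism}) for $\overline{\sigma}_{\lambda_t}$: each application slides the transition arrow one index to the left while appending one further copy of $\co(\vec{y}'+\vec{c})$ at the right end. After $k_t$ applications the transition arrow is at position $p_t-k_t-1$, with $\co(\vec{y}')$ at positions $0,\ldots,p_t-k_t-1$ and $\co(\vec{y}'+\vec{c})$ at positions $p_t-k_t,\ldots,p_t-1$; the terminal entry $E_0(\lambda_t)$ reads $\co(\vec{y}'+\vec{c})$, as required.

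The main obstacle is verifying that $\overline{\sigma}_{\lambda_t}$ really implements the grading shift by $\vec{x}_t$ on line bundles, since this is not an intrinsic computation inside $\mathcal{H}(p_t;\lambda_t)$ but a property of the identification $\bbX_t\cong\mathcal{H}(p_t;\lambda_t)$. The relation $\overline{\sigma}_{\lambda_t}^{p_t}=\sigma_{\lambda_t}$ pins down $\overline{\sigma}_{\lambda_t}$ as a $p_t$-th root of the shift by $\vec{c}$ coming from the ordinary point $\lambda_t\in\bbX_{t-1}$, and the choice fixed by the construction of \cite[Section 6.2.2]{[Kussin]} is precisely the shift by the new generator $\vec{x}_t$ of $\bbL(\mathbf{p})$. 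Once this compatibility is invoked, the remainder of the argument is bookkeeping about the position of the transition arrow under iterated shifts, which is routine.
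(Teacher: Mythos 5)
Your route—realize the case $k_t=0$ via $\iota$ and then apply $\overline{\sigma}_{\lambda_t}$ exactly $k_t$ times—is different from the paper's, and the bookkeeping in your second paragraph (the transition arrow sliding to position $p_t-k_t-1$) is correct as far as it goes. The gap is the step you yourself flag and then dispose of by appeal to the construction: that under the identification $\mathcal{H}(p_t;\lambda_t)\simeq\coh\bbX(\mathbf{p},\bfla)$ the automorphism $\overline{\sigma}_{\lambda_t}$ is the degree shift by $\vec{x}_t$ (and, already in your base case, that $\iota(\co(\vec{y}'))$ is the line bundle $\co(\vec{y}')$ of the new curve). For line bundles this compatibility \emph{is} essentially the content of Proposition \ref{$p$-cycle interpretation for line bundles}, so invoking it as known makes the argument circular unless you can point to a precise statement in \cite{[Kussin]} or \cite{[L]} that says exactly this. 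Moreover, the heuristic you offer in its support is false: the relation $\overline{\sigma}_{\lambda_t}^{p_t}=\sigma_{\lambda_t}$ does not pin the functor down, because $p_t$-th roots of the tubular shift at an ordinary point are far from unique—already in $\bbL(2,2,2)$ each of the shifts by $\vec{x}_1,\vec{x}_2,\vec{x}_3$ squares to the shift by $\vec{c}$. To single out $(\vec{x}_t)$ one needs finer properties of $\overline{\sigma}_{\lambda_t}$ (it fixes sheaves supported off $\lambda_t$ and rotates the new rank-$p_t$ tube at $\lambda_t$), and one must then convert this into a formula on line bundles; that conversion is precisely what has to be proved.

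This is the point the paper's proof addresses intrinsically: it argues by induction on the $\vec{x}_t$-coefficient, realizing the next line bundle as the unique nonsplit extension of a type-(2) simple $S_{\lambda_t,j+1}$ (see \ref{section of simples}) by the cycle already identified with $\co(\vec{y}'+j\vec{x}_t)$, and it matches this extension with the defining sequence (\ref{important short exact sequences2}) using $\dim_{\mathbf{k}}\Ext^1\big(S_{t,j+1},\co(\vec{y}'+j\vec{x}_t)\big)=1$; this simultaneously identifies the simples $S_{\lambda_t,j}$ with the $S_{t,j}$ and yields the displayed $p_t$-cycle, with no prior knowledge of how $\overline{\sigma}_{\lambda_t}$ acts under the equivalence. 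If you want to keep your shift-automorphism argument, you must either quote a reference that states the compatibility of $\overline{\sigma}_{\lambda_t}$ with $(\vec{x}_t)$ (together with the compatibility of $\iota$ with the gradings), or prove it—for instance by applying the extension-by-simples comparison to $\overline{\sigma}_{\lambda_t}\iota(\co(\vec{y}'))$, at which point your proof essentially becomes the paper's.
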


\begin{proof}
Recall from (\ref{embedding}) that there exists a full exact embedding
$$\iota: {\mathcal{H}}\to \mathcal{H}(p_t;\lambda_t); \quad E\mapsto (E=\cdots =E\to E(x)).$$
We obtain $$\iota(\co(\vec{y}^{\prime}))=\big(\co(\vec{y}^{\prime})=\cdots =\co\big(\vec{y}^{\prime})\to \co(\vec{y}^{\prime}+\vec{c})\big).$$
Consider the following commutative diagram (each column is exact sequence)
\small{$$\xymatrix{
\co(\vec{y}^{\prime})\ar@{=}[r]\ar[d]^{u_0}& \cdots\ar@{=}[r]&\co(\vec{y}^{\prime})\ar@{=}[r]\ar[d]^{u_{p_t-j-2}}&\co(\vec{y}^{\prime})\ar[r]\ar[d]^{u_{p_t-j-1}}&\co(\vec{y}^{\prime}+\vec{c})\ar@{=}[r]\ar[d]^{u_{p_t-j}}& \cdots\ar@{=}[r]&\co(\vec{y}^{\prime}+\vec{c})\ar[d]^{u_{p_t}}
\\
\co(\vec{y}^{\prime})\ar@{=}[r]\ar[d]^{0}& \cdots\ar@{=}[r]&\co(\vec{y}^{\prime})\ar[r]\ar[d]^{0}&\co(\vec{y}^{\prime}+\vec{c})\ar@{=}[r]\ar[d]^{v_{p_t-j-1}}& \co(\vec{y}^{\prime}+\vec{c})\ar@{=}[r]\ar[d]^{0}& \cdots\ar@{=}[r]&\co(\vec{y}^{\prime}+\vec{c})\ar[d]^{0}
\\
0\ar@{=}[r]& \cdots\ar@{=}[r]&0\ar[r]&S_{\lambda_t}\ar[r]&0\ar@{=}[r]& \cdots\ar@{=}[r]&0.
}$$}
Here, the third row is the simple $S_{\lambda_t,j+1}$. If $j=0$, the first row is just $\iota(\co(\vec{y}^{\prime}))$. Note that there exists only one simple sheaf $S_{t,1}$ for $\lambda_t$ such that the extension of $S_{t,1}$ by $\co(\vec{y}^{\prime})$ is nonsplit. In this case, $\textup{dim}_{{\bf k}}\textup{Ext}^1(\co(\vec{y}
^{\prime})),S_{t,1})=1$. It implies that $S_{\lambda_t,1}=S_{t,1}$. Combining with (\ref{important short exact sequences2}), we obtain that the second row in the diagram for $j=0$ corresponds to $\co(\vec{y}^{\prime}+\vec{x}_3)$, which is also the first row in the diagram for $j=1$. By induction on $j$, we get $S_{\lambda_t,j}=S_{t,j}$ and the result holds for $j=1,2,\cdots,p$.
\end{proof}

\subsection{\texorpdfstring{$p$}{$p$}-cycle interpretation for extension bundles}
In this subsection, we state the $p$-cycle interpretation for extension bundles in ${\rm coh}\mbox{-}\mathbb{X}(\mathbf{p}, {\boldsymbol\lambda})$ with three weights. We simply write $\bbX(\mathbf{p},{\boldsymbol\lambda})=\bbX(\mathbf{p})$ when $t\leq 3$, under the normalization of ${\boldsymbol\lambda}$ in the sense that $\lambda_1=\infty, \lambda_2=0,\lambda_3=1.$

\begin{defn}\textup{\cite[Definition 4.1]{[KLM1]}}
For a line bundle $L\in {\rm coh}\mbox{-}\bbX(p_1, p_2, p_3)$ with $p_i\geq 2, i=1,2,3$, and $\vec{x}=\sum_{i=1}^{3}l_i\vec{x}_i$ with $0\leq l_i\leq p_i-2$, let
\begin{equation}\label{extension bundle}\xymatrix{0\ar[r]& L(\vec{\omega})\ar[r]^{f}& E\ar[r]^{g}& L(\vec{x}) \ar[r]& 0,}
\end{equation}
be nonsplit exact sequence. The central term $E_{L}\langle \vec{x}\rangle:=E$ of the sequence, which is uniquely defined up to isomorphism, is called the \emph{extension bundle} given by the data $(L,\vec{x})$. If $L=\co,$ then we just write $E\langle \vec{x}\rangle$. For $\vec{x}=0,$ the sequence
$$\xymatrix{0\ar[r]& L(\vec{\omega})\ar[r]^{f}& E_{L}\langle 0\rangle\ar[r]^{g}& L \ar[r]& 0}$$ is almost split, and $E(L):=E_{L}\langle 0\rangle$ is called the \emph{Auslander bundle} associated with $L$.
\end{defn}

For simplicity, we define a mapping $\phi: \bbL(p_1, p_2, p_3)\rightarrow \bbL(p_1, p_2)$ by sending $\vec{x}$ with the normal form $\vec{x}=\sum_{i=1}^{3}l_i\vec{x}_i+l\vec{c}$ to $\phi(\vec{x})=\sum_{i=1}^{2}l_i\vec{x}+l\vec{c}.$

The following result  presents the $p$-cycle interpretation for extension bundles in the category ${\rm coh}\mbox{-}\mathbb{X}(p_1, p_2, p_3)$.

\begin{prop}\label{p cycle interpretation for extension bundle}
Let $p_1,p_2,p_3$ be positive integers greater than $1$. Assume that $\vec{x}=\sum_{i=1}^3l_i\vec{x}_i$ with $0\leq l_i\leq p_i-2$ for $i=1,2,3$, and $\vec{y}=\sum_{i=1}^3k_i\vec{x}_i+k\vec{c}$ is in normal form. Then the extension bundle $E\langle\vec{x}\rangle(\vec{y})$ corresponds to the image of the following $p_3$-cycle under the shift action $\overline{\sigma}_x^{k_3}$ in
 $\mathcal{H}(p_3;x)$ with $\mathcal{H}={\rm coh}\mbox{-}\bbX(p_1, p_2)$,
$$\xymatrix@C=1.268cm{
E_0\ar[r]^{y_0}&E_1\ar@{=}[r] &\cdots\ar@{=}[r]& E_{p_3-l_3-1}\ar[r] ^{y_{p_3-l_3-1}}&E_{p_3-l_3}\ar@{=}[r]&\cdots\ar@{=}[r]\cdots\ar@{=}[r]& E_{0}(\vec{c}).}$$
Here
$$E_i=\left\{
\begin{array}{ll}
    \mathcal{O}(-\vec{x}_1+l_2\vec{x}_2+\phi(\vec{y}))\oplus\mathcal{O}(l_1\vec{x}_1-\vec{x}_2+\phi(\vec{y})) & i=0\\
\mathcal{O}\big(\phi(\vec{\omega})+\vec{c}+\phi(\vec{y})\big)\oplus\mathcal{O}\big(\phi(\vec{x})+\phi(\vec{y})\big) & 1\leq i\leq p_3-l_3-1, \\
\end{array}\right.$$
and
$$y_{0}=\begin{bmatrix}
x_2^{p_2-l_2-1}&-x_1^{p_1-l_1-1}\\
x_1^{l_1+1}&-x_2^{l_2+1}
\end{bmatrix}, \quad y_{p_3-l_3-1}=\begin{bmatrix}
x_2^{l_2+1}&-x_1^{p_1-l_1-1}\\
x_1^{l_1+1}&-x_2^{p_2-l_2-1}
\end{bmatrix}.$$
\end{prop}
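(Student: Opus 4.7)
The plan is to reduce the statement to the case $\vec{y}=0$ and then read off the middle term of the defining extension sequence for $E\langle\vec{x}\rangle$ pointwise inside $\mathcal{H}(p_3;\lambda_3)$, using Proposition \ref{$p$-cycle interpretation for line bundles}. First, decompose $\vec{y}=\phi(\vec{y})+k_3\vec{x}_3$ (both summands regarded as elements of $\bbL(p_1,p_2,p_3)$ via the natural inclusion $\bbL(p_1,p_2)\hookrightarrow\bbL(p_1,p_2,p_3)$). The twist autoequivalence $(\vec{y})$ on $\coh\bbX(p_1,p_2,p_3)\simeq\mathcal{H}(p_3;\lambda_3)$ splits as a pointwise twist by $\phi(\vec{y})$ followed by the shift $\overline{\sigma}_{\lambda_3}^{k_3}$, since by construction $\overline{\sigma}_{\lambda_3}^{p_3}=\sigma_{\lambda_3}$ realises the class $\vec{c}$ in $\bbL(p_1,p_2)$. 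It therefore suffices to identify the $p_3$-cycle of $E\langle\vec{x}\rangle$ (the case $\vec{y}=0$) and then twist at the end.

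For $\vec{y}=0$, apply Proposition \ref{$p$-cycle interpretation for line bundles} to the outer terms of $0\to\co(\vec{\omega})\to E\langle\vec{x}\rangle\to\co(\vec{x})\to 0$. Because $\vec{\omega}=\vec{c}-\vec{x}_1-\vec{x}_2-\vec{x}_3$ has normal-form coefficient $p_3-1$ at $\vec{x}_3$, its cycle jumps at position $0$ from $\co(-\vec{x}_1-\vec{x}_2)$ to $\co(-\vec{x}_1-\vec{x}_2+\vec{c})$, while the cycle for $\co(\vec{x})$ jumps at position $p_3-l_3-1$ from $\co(l_1\vec{x}_1+l_2\vec{x}_2)$ to $\co(l_1\vec{x}_1+l_2\vec{x}_2+\vec{c})$. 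The defining sequence is pointwise exact in $\mathcal{H}(p_3;\lambda_3)$, so I compute each $E_i$ inside $\coh\bbX(p_1,p_2)$. At positions $1\le i\le p_3-l_3-1$, a Serre-duality check using $\vec{\omega}_{(p_1,p_2)}=-\vec{x}_1-\vec{x}_2$ gives $\Ext^1(\co(l_1\vec{x}_1+l_2\vec{x}_2),\co(-\vec{x}_1-\vec{x}_2+\vec{c}))=0$ (the shift $l_1\vec{x}_1+l_2\vec{x}_2-\vec{c}$ has normal-form coefficient $-1$ at $\vec{c}$), so the extension splits into the direct sum stated. At position $0$ the extension is essential, and its middle term is the classical two-weight extension bundle $\co(-\vec{x}_1+l_2\vec{x}_2)\oplus\co(l_1\vec{x}_1-\vec{x}_2)$. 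The remaining positions $p_3-l_3,\ldots,p_3-1$ are then forced to be the $\vec{c}$-twist of position $0$, matching $E_0(\vec{c})$ under the cycle condition $E_{p_3}=E_0(\vec{c})$.

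It remains to pin down the transition matrices $y_0$ and $y_{p_3-l_3-1}$. Each sits inside a three-row commutative square whose outer rows are the jump maps in the $\co(\vec{\omega})$- and $\co(\vec{x})$-cycles, and whose columns are the canonical monomorphism $\co(\vec{\omega})_i\hookrightarrow E_i$ and epimorphism $E_i\twoheadrightarrow\co(\vec{x})_i$ coming from the two-weight extension-bundle description. Matching the degree of each of the four line-bundle maps then dictates the entries up to sign; for instance the $(1,1)$-entry of $y_0$ must carry $\co(-\vec{x}_1+l_2\vec{x}_2)\to\co(-\vec{x}_1-\vec{x}_2+\vec{c})$, whose degree difference is $(p_2-l_2-1)\vec{x}_2$, forcing the entry $x_2^{p_2-l_2-1}$, and similarly for the other three entries of $y_0$ and for $y_{p_3-l_3-1}$. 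Signs and relative scalars are fixed by the requirement that the composition around one full period reproduce multiplication by $x_2^{p_2}-\lambda_3 x_1^{p_1}$, the universal map defining the ordinary-point simple $S_{\lambda_3}$ in $\coh\bbX(p_1,p_2)$ via (\ref{important short exact sequences1}). The principal obstacle is organising this sign and cycle-compatibility bookkeeping simultaneously at both jump positions; once that is settled, applying the pointwise twist by $\phi(\vec{y})$ and the shift $\overline{\sigma}_{\lambda_3}^{k_3}$ yields the general form in the statement.
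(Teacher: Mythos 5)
Your proposal is correct and follows essentially the same route as the paper: reduce to $\vec{y}=0$ via the decomposition $\vec{y}=\phi(\vec{y})+k_3\vec{x}_3$, read the defining sequence pointwise against the line-bundle cycles from Proposition \ref{$p$-cycle interpretation for line bundles}, use Serre duality in $\coh\bbX(p_1,p_2)$ to see that only position $0$ can carry a nonsplit extension, and recover $y_0$, $y_{p_3-l_3-1}$ from commutativity/degree constraints (your periodicity check $y_{p_3-l_3-1}y_0=(x_2^{p_2}-\lambda_3x_1^{p_1})\cdot\mathrm{id}$ is exactly the paper's diagram condition). The only point stated without justification is that the pointwise extension at position $0$ is genuinely nonsplit; the paper sidesteps this by building the candidate cycle (nonsplit at position $0$, hence globally nonsplit) and invoking uniqueness of the nonsplit middle term $E\langle\vec{x}\rangle$, a one-line fix you could adopt verbatim.
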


\begin{proof}
First, we prove the result for the case $\vec{y}=0$. By Proposition \ref{$p$-cycle interpretation for line bundles}, we know that $\mathcal{O}(\vec{\omega})$ (resp. $\mathcal{O}(\vec{x})$) corresponds to the $p_3$-cycle which is the first row (resp. the third row) in the following diagram.

\begin{figure}[ht]
    \centering
\begin{tikzpicture}
\node (1) at (0,0) {\tiny{$\mathcal{O}(\phi(\vec{\omega}))$}};
\node (2) at (2,0) {\tiny{$\mathcal{O}(\phi(\vec{\omega})+\vec{c})$}};
\node (3) at (4,0) {\tiny{$\cdots$}};
\node (4) at (6,0) {\tiny{$\mathcal{O}(\phi(\vec{\omega})+\vec{c})$}};
\node (5) at (8.5,0) {\tiny{$\mathcal{O}(\phi(\vec{\omega})+\vec{c})$}};
\node (6) at (10.5,0) {\tiny{$\cdots$}};
\node (7) at (12.5,0) {\tiny{$\mathcal{O}(\phi(\vec{\omega})+\vec{c})$}};
\draw[->] (1) to (2);
\draw[-] (2.9,0.04) to (3.6,0.04);
\draw[-] (2.9,-0.04) to (3.6,-0.04);
\draw[-] (5.1,0.04) to (4.4,0.04);
\draw[-] (5.1,-0.04) to (4.4,-0.04);
\draw[-] (6.9,0.04) to (7.6,0.04);
\draw[-] (6.9,-0.04) to (7.6,-0.04);
\draw[-] (9.4,0.04) to (10.1,0.04);
\draw[-] (9.4,-0.04) to (10.1,-0.04);
\draw[-] (11.6,0.04) to (10.9,0.04);
\draw[-] (11.6,-0.04) to (10.9,-0.04);
\node (11) at (0,-1.2) {\tiny{$E_0$}};
\node (12) at (2,-1.2) {\tiny{$E_1$}};
\node (13) at (4,-1.2) {\tiny{$\cdots$}};
\node (14) at (6,-1.2) {\tiny{$E_{p_3-l_3-1}$}};
\node (15) at (8.5,-1.2) {\tiny{$E_{p_3-l_3}$}};
\node (16) at (10.5,-1.2) {\tiny{$\cdots$}};
\node (17) at (12.5,-1.2) {\tiny{$E_0(x)$}};
\node (21) at (0,-2.4) {\tiny{$\mathcal{O}(\phi(\vec{x}))$}};
\node (22) at (2,-2.4) {\tiny{$\mathcal{O}(\phi(\vec{x}))$}};
\node (23) at (4,-2.4) {\tiny{$\cdots$}};
\node (24) at (6,-2.4) {\tiny{$\mathcal{O}(\phi(\vec{x}))$}};
\node (25) at (8.5,-2.4) {\tiny{$\mathcal{O}(\phi(\vec{x})+\vec{c})$}};
\node (26) at (10.5,-2.4) {\tiny{$\cdots$}};
\node (27) at (12.5,-2.4) {\tiny{$\mathcal{O}(\phi(\vec{x})+\vec{c})$}};
\draw[-] (0.6,-2.36) to (1.4,-2.36);
\draw[-] (0.6,-2.44) to (1.4,-2.44);
\draw[-] (2.9,-2.36) to (3.6,-2.36);
\draw[-] (2.9,-2.44) to (3.6,-2.44);
\draw[-] (5.1,-2.36) to (4.4,-2.36);
\draw[-] (5.1,-2.44) to (4.4,-2.44);
\draw[->] (24) to (25);
\draw[-] (9.4,-2.36) to (10.1,-2.36);
\draw[-] (9.4,-2.44) to (10.1,-2.44);
\draw[-] (11.6,-2.36) to (10.9,-2.36);
\draw[-] (11.6,-2.44) to (10.9,-2.44);
\draw[->] (11) to (12);
\draw[->] (12) to (13);
\draw[->] (13) to (14);
\draw[->] (14) to (15);
\draw[->] (15) to (16);
\draw[->] (16) to (17);
\draw[->] (1) to (11);
\draw[->] (2) to (12);
\draw[->] (4) to (14);
\draw[->] (5) to (15);
\draw[->] (7) to (17);
\draw[->] (11) to (21);
\draw[->] (12) to (22);
\draw[->] (14) to (24);
\draw[->] (15) to (25);
\draw[->] (17) to (27);
\node () at (1,0.35) {\tiny{$x_2^{p_2}-x_1^{p_1}$}};
\node () at (7.2,-2.2) {\tiny{$x_2^{p_2}-x_1^{p_1}$}};
\node () at (0.25,-0.6) {\tiny{$u_0$}};
\node () at (2.25,-0.6) {\tiny{$u_1$}};
\node () at (6.7,-0.6) {\tiny{$u_{p_3-l_3-1}$}};
\node () at (9,-0.6) {\tiny{$u_{p_3-l_3}$}};
\node () at (12.8,-0.6) {\tiny{$u_{p_3}$}};
\node () at (0.25,-1.8) {\tiny{$v_0$}};
\node () at (2.25,-1.8) {\tiny{$v_1$}};
\node () at (6.7,-1.8) {\tiny{$v_{p_3-l_3-1}$}};
\node () at (9,-1.8) {\tiny{$v_{p_3-l_3}$}};
\node () at (12.8,-1.8) {\tiny{$v_{p_3}$}};
\node () at (1,-1) {\tiny{$y_{0}$}};
\node () at (3,-1) {\tiny{$y_{1}$}};
\node () at (7.3,-1) {\tiny{$y_{p_3-l_3-1}$}};
\node () at (11.5,-1) {\tiny{$y_{p_3-1}$}};
\end{tikzpicture}
\end{figure}
We assume that the diagram commutes and  each column is exact sequence. By Serre duality, we have
$$\Ext_{\mathcal{H}}^1(\co(\vec{z}_1), \co(\vec{z}_2))\cong D\Hom_{\mathcal{H}}(\co(\vec{z}_2),\co(\vec{z}_1-\vec{x}_1-\vec{x}_2)).$$
Hence, there exists ``unique'' non-split exact sequence
$$0\rightarrow\mathcal{O}(\phi(\vec{\omega}))\xrightarrow{\begin{bmatrix}
    X_2^{l_2+1}\\X_1^{l_1+1}
\end{bmatrix}}\mathcal{O}(-\vec{x}_1+l_2\vec{x}_2)\oplus\mathcal{O}(l_1\vec{x}_1-\vec{x}_2)\xrightarrow{\begin{bmatrix}
    X_1^{l_1+1}&-X_2^{l_2+1}
\end{bmatrix}}\mathcal{O}(\phi(\vec{x}))\rightarrow0$$
and no nonsplit extension of $\mathcal{O}(\phi(\vec{x}))$ by $\mathcal{O}(\phi(\vec{\omega})+\vec{c})$. It follows that
$$E_i=\left\{
\begin{array}{ll}
    \mathcal{O}(-\vec{x}_1+l_2\vec{x}_2)\oplus\mathcal{O}(l_1\vec{x}_1-\vec{x}_2) & i=0\\
\mathcal{O}\big(\phi (\vec{\omega})+\vec{c}\big)\oplus\mathcal{O}\big(\phi(\vec{x})\big) & 1\leq i\leq p_3-l_3-1 \\
    \mathcal{O}(-\vec{x}_1+l_2\vec{x}_2+\vec{c})\oplus\mathcal{O}(l_1\vec{x}_1-\vec{x}_2+\vec{c}) & p_3-l_3\leq i\leq p_3-1 \\
\end{array}\right.$$
Using the commutative property of the diagram, we obtain $y_i$ is identity, except
$$y_{0}=\begin{bmatrix}
x_2^{p_2-l_2-1}&-x_1^{p_1-l_1-1}\\
x_1^{l_1+1}&-x_2^{l_2+1}
\end{bmatrix}, \quad y_{p_3-l_3-1}=\begin{bmatrix}
x_2^{l_2+1}&-x_1^{p_1-l_1-1}\\
x_1^{l_1+1}&-x_2^{p_2-l_2-1}
\end{bmatrix}.$$

Note that $E\langle\vec{x}\rangle$ is the ``unique'' middle term of the nonsplit extension of $\mathcal{O}(\vec{x})$ by $\mathcal{O}(\vec{\omega})$. It implies that $E\langle\vec{x}\rangle$ corresponds to the $p_3$-cycle which is just the second row in the diagram.

Finally, $E\langle\vec{x}\rangle(\vec{y})$ is the image of $E\langle\vec{x}\rangle$ under the twist action $(\vec{y})$. Here, $\vec{y}=\phi(\vec{y})+k_3\vec{x}_3$. The twist action $\big(\phi(\vec{y})\big)$ on $\mathcal{H}(p_3; x)$ acts on a $p_3$-cycle via pointwise actions, and the action $(k_3\vec{x}_3)$ is just the shift action $\overline{\sigma}_x^{k_3}$. We finish the proof.
\end{proof}

\begin{rem}
Combining Proposition \ref{p cycle interpretation for extension bundle} with \cite[Proposition 2.3]{[DR]}, the Auslander bundle corresponds to the $p_3$-cycle
\begin{equation*}\xymatrix{
E_0\ar[r]^{y_0} & E_{1}\ar[r]^{y_{1}} & \cdots \ar[r]^{y_{p_3-2}}&  E_{p_3-1}\ar[r]^{y_{p_3-1}} & E_{0}(x),
}\end{equation*}
with all $y_i$ are identities except $y_j$ and $y_{j+1}$ for some $0\leq j\leq p_3-2$.
\end{rem}

\section{Reduction functor \texorpdfstring{$\psi^j$}{$\psi^j$} and insertion functor \texorpdfstring{$\psi_j$}{$\psi_j$}}
Throughout this section,
let $\bbX$ be an exceptional curve, and $\mathcal{H}$ the category of coherent sheave over $\bbX$.
We fix a point $x$ in $\bbX$. For any integer $p\geq 1$, denote by $\mathcal{H}(p):=\mathcal{H}(p;x)$ the category of all the $p$-cycles in $\mathcal{H}$ which are concentrated in $x$.

For any $0\leq j\leq p-1$, we define a pair of functors $(\psi^j, \psi_j)$ as follows.
The \emph{reduction functor} $\psi^j: \mathcal{H}(p)\to \mathcal{H}(p-1)$ is defined by sending a $p$-cycle $E$ of the form (\ref{p-cycle})
to a $(p-1)$-cycle:
$$\xymatrix{
E_0\ar[r]^{x_0} & \cdots\ar[r] & E_{j-1}\ar[r]^{x_jx_{j-1}} &  E_{j+1}\ar[r]^{x_{j+1}} &\cdots\ar[r] & E_{p-1}\ar[r]^{x_{p-1}} & E_{0}(x),
}
$$
and sending a morphism of $p$-cycles $(u_0,u_1,\cdots, u_{p-1})$ to
$(u_0,\cdots, u_{j-1}, u_{j+1}, \cdots, u_{p-1}).$
Conversely, the \emph{insertion functor} $\psi_j: \mathcal{H}(p-1)\to \mathcal{H}(p)$ is defined by sending a $(p-1)$-cycle $E'$ of the form
\begin{equation}\label{p-1 cycle}
\xymatrix{
E'_0\ar[r]^{x'_0} & E'_{1}\ar[r]^{x'_{1}} &\cdots\ar[r] & E'_{p-2}\ar[r]^{x'_{p-2}} & E'_{0}(x)
}\end{equation}
to a $p$-cycle
\begin{equation}\label{image of p-1 cycle}\xymatrix@C=0.6cm{
E'_0\ar[r]^{x'_0} & \cdots\ar[r] & E'_{j}\ar@{=}[r]  &E'_{j}\ar[r]^{x'_j}  & E'_{j+1}\ar[r]^{x'_{j+1}} &\cdots\ar[r] & E'_{p-2}\ar[r]^{x'_{p-2}} & E'_{0}(x),}
\end{equation}
and sending a morphism of $(p-1)$-cycles $(u'_0,\cdots, u'_{j}, u'_{j+1}, \cdots, u'_{p-2})$ to a morphism of $p$-cycles $(u'_0,\cdots, u'_{j}, u'_{j}, u'_{j+1}, \cdots, u'_{p-2})$.

Roughly speaking, $\psi^j(E)$ is obtained from $E$ by deleting the item $E_j$ and taking the composition of the morphisms $\xymatrix{
E_{j-1}\ar[r]^{x_{j-1}}&E_{j}\ar[r]^{x_{j}} & E_{j+1}}$; conversely, $\psi_j(E')$ is obtained from $E'$ by inserting in the $j$-th position a copy of $E'_j$: $\xymatrix{
E'_{j}\ar@{=}[r]  &E'_{j}\ar[r]^{x'_j}  & E'_{j+1}}$.

Recall that the functor $\sigma_x: \mathcal{H}\to \mathcal{H}; \;E\mapsto E(x)$ induces a functor on $\mathcal{H}(p)$ (resp. $\mathcal{H}(p-1)$) via pointwise actions, which shares the same notation $\sigma_x$. Denote by $\overline{\sigma}_{x}$ and $\overline{\sigma}_{x}'$ the natural shift automorphisms on $\mathcal{H}(p)$ and $\mathcal{H}(p-1)$ respectively satisfying $$\overline{\sigma}_{x}^p=\sigma_x: \mathcal{H}(p)\to \mathcal{H}(p)\quad{\text {and}}\quad(\overline{\sigma}_{x}')^{p-1}=\sigma_x: \mathcal{H}(p-1)\to\mathcal{H}(p-1).$$
By the above constructions, we have $$\psi^{p-1}=\psi^0\overline{\sigma}_{x}^{-1}
\quad{\text {and}}\quad \psi_{p-1}=\overline{\sigma}_{x}\psi_0.$$
For any $n\in\bbZ$, define
\begin{equation}\label{def of fun for gen case} \psi^{np+j}=(\overline{\sigma}_{x}')^{-n}\psi^{j}
\text{\quad and \quad}\psi_{np+j}=\psi_{j}(\overline{\sigma}_{x}')^{n}.
\end{equation}
For any integer $j$, the functor $\psi^{j}$ is called a \emph{reduction functor} and
the functor $\psi_{j}$ is called an \emph{insertion functor}.

\subsection{The functors \texorpdfstring{$\psi^j,\psi_j$}{$\psi^j,\psi_j$} act on line bundles}
Let $$\psi^j: {\rm coh}\mbox{-}\bbX(p_1, p_2, p_3)\to {\rm coh}\mbox{-}\bbX(p_1, p_2, p_3-1),$$
$$\psi_j: {\rm coh}\mbox{-}\bbX(p_1, p_2, p_3)\to {\rm coh}\mbox{-}\bbX(p_1, p_2, p_3+1)$$ be the reduction functor and insertion functor. According to Proposition \ref{$p$-cycle interpretation for line bundles}, we get the following description directly.

\begin{lem}\label{ActOnL}
Let $\vec{y}=\sum_{i=1}^{3}k_i\vec{x}_{i}+k\vec{c}\in \bbL(p_1, p_2, p_3)$ be the normal form. Then
\begin{equation}
\psi^j(\co(\vec{y}))=
  \left\{
  \begin{array}{ll}
   \co(\vec{y})& \;\;0\leq j< p_3-k_3\nonumber\\
  \co(\vec{y}-\vec{x}_3)& \;\;p_3-k_3\leq j\leq p_3-1
  \end{array}
\right.\end{equation}
and
\begin{equation}
\psi_j(\co(\vec{y}))=
  \left\{
  \begin{array}{ll}
   \co(\vec{y})& \;\;0\leq j< p_3-k_3\nonumber\\
  \co(\vec{y}+\vec{x}_3)& \;\;p_3-k_3\leq j\leq p_3-1.
  \end{array}
\right.\end{equation}
\end{lem}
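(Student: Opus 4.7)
The plan is to apply the definitions of $\psi^j$ and $\psi_j$ directly to the explicit $p_3$-cycle representation of $\co(\vec{y})$ provided by Proposition~\ref{$p$-cycle interpretation for line bundles}, and then recognize the resulting cycle, whose length is either $p_3-1$ or $p_3+1$, as a line bundle via the same proposition applied in $\bbL(p_1,p_2,p_3-1)$ or $\bbL(p_1,p_2,p_3+1)$.

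By Proposition~\ref{$p$-cycle interpretation for line bundles}, $\co(\vec{y})$ is identified with the $p_3$-cycle $(E_i,y_i)$ in $\mathcal{H}(p_3;\lambda_3)$, where $\mathcal{H}=\coh\bbX(p_1,p_2)$, having $E_i=\co(\vec{y}')$ for $0\le i\le p_3-k_3-1$ and $E_i=\co(\vec{y}'+\vec{c})$ for $p_3-k_3\le i\le p_3-1$, with every $y_i$ equal to the identity except for the single nontrivial morphism $y_{p_3-k_3-1}$. Any cycle of this simple shape is determined up to isomorphism by the position of its unique non-identity morphism, so the task is to track where this distinguished morphism lands after applying $\psi^j$ or $\psi_j$, and then translate the resulting position back into a normal form via Proposition~\ref{$p$-cycle interpretation for line bundles}.

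For the reduction functor $\psi^j$, removing the $j$-th object and replacing the two adjacent morphisms by their composition preserves the shape, since composing an identity with the nontrivial morphism reproduces that morphism. A short case check distinguishing $j<p_3-k_3-1$, $j=p_3-k_3-1$, $j=p_3-k_3$ and $j>p_3-k_3$ shows that in the $(p_3-1)$-cycle the nontrivial morphism sits at position $p_3-k_3-2$ exactly when $0\le j< p_3-k_3$, and at position $p_3-k_3-1$ exactly when $p_3-k_3\le j\le p_3-1$. By Proposition~\ref{$p$-cycle interpretation for line bundles} applied in $\bbL(p_1,p_2,p_3-1)$, these positions correspond respectively to the line bundles with third coefficients $k_3$ and $k_3-1$, i.e., to $\co(\vec{y})$ and $\co(\vec{y}-\vec{x}_3)$. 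The analysis for $\psi_j$ is entirely parallel: inserting an identity morphism at position $j$ shifts the unique nontrivial morphism forward by one when $0\le j<p_3-k_3$ and leaves it in place when $p_3-k_3\le j\le p_3-1$, so that reading off these positions in $\bbL(p_1,p_2,p_3+1)$ yields $\co(\vec{y})$ and $\co(\vec{y}+\vec{x}_3)$ respectively.

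The only mild obstacle is the bookkeeping at the boundary values $j=p_3-k_3-1$ and $j=p_3-k_3$, where the composition for $\psi^j$ (or the inserted identity for $\psi_j$) directly meets the unique nontrivial morphism. These cases are settled by the elementary observation that pre- or post-composing with an identity does not alter the nontrivial morphism, so the output cycle remains in the shape described by Proposition~\ref{$p$-cycle interpretation for line bundles}, merely with the distinguished morphism relocated; once these edge cases are cleanly accounted for, the claimed identifications follow at once.
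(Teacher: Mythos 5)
Your proposal is correct and follows essentially the same route as the paper: both identify $\co(\vec{y})$ with the $p_3$-cycle of Proposition~\ref{$p$-cycle interpretation for line bundles} having a single non-identity morphism at position $p_3-k_3-1$, apply the definitions of $\psi^j$ and $\psi_j$, and read off the resulting position in $\bbL(p_1,p_2,p_3\mp1)$. The only cosmetic difference is that the paper treats the boundary case $j=0$ separately via the relation $\psi^0=\overline{\sigma}'\psi^1\overline{\sigma}^{-1}$ (splitting into $k_3=0$ and $k_3\neq0$), whereas you absorb it into the uniform position-tracking using the periodicity of the cycle, which works equally well.
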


\begin{proof}
By Proposition \ref{$p$-cycle interpretation for line bundles}, $\co(\vec{y})$ corresponds to the following $p_3$-cycle
$$\xymatrix@C=1.268cm{
\co(\phi(\vec{y}))\ar@{=}[r]& \cdots\ar@{=}[r] &\co(\phi(\vec{y}))\ar[r]^{y_{p_3-k_3-1}}& \co(\phi(\vec{y})+\vec{c})\ar@{=}[r]& \cdots\ar@{=}[r]&\co(\phi(\vec{y})+\vec{c}).}$$
According to the definition of the reduction functor $\psi^j$, we know $\psi^j(\co(\vec{y}))=\co(\vec{y})$ in $\bbL(p_1, p_2, p_3-1)$ for $0< j< p_3-k_3$, and $\psi^j(\co(\vec{y}))=\co(\vec{y}-\vec{x}_3)$ for $p_3-k_3\leq j\leq p_3-1$. Moreover, $$\psi^0(\co(\vec{y}))=\overline{\sigma}'\psi^1\overline{\sigma}^{-1}(\co(\vec{y}))=\overline{\sigma}'\psi^1(\co(\vec{y}-\vec{x}_3)).$$
For $k_3=0$, we have
$$\psi^0(\co(\vec{y}))=\overline{\sigma}'\psi^1(\co(\vec{y}-\vec{c}+(p_3-1)\vec{x}_3))=\co(\vec{y}-\vec{c}+(p_3-1)\vec{x}_3))=\co(\vec{y}).$$
For $k_3\neq0$, it follows that
$$\psi^0(\co(\vec{y}))=\overline{\sigma}'\psi^1(\co(\sum_{i=1}^{2}k_i\vec{x}_{i}+k\vec{c}+(k_3-1)\vec{x}_3))=\co(\vec{y}).$$
Therefore,
\begin{equation}
\psi^j(\co(\vec{y}))=
  \left\{
  \begin{array}{ll}
   \co(\vec{y})& \;\;0\leq j< p_3-k_3\nonumber\\
  \co(\vec{y}-\vec{x}_3)& \;\;p_3-k_3\leq j\leq p_3-1
  \end{array}
\right.\end{equation}
The condition (2) can be proven by a similar calculation.
\end{proof}

\subsection{The reduction functor acts on extension bundles}

The reduction functor $\psi^j$ does not always preserve the extension bundles while it preserves the line bundles by the above calculation. The following proposition gives the image of the extension bundle under the reduction functor.

\begin{prop}\label{element reduction functor}
Assume that $\vec{x}=\sum_{i=1}^3l_i\vec{x}_i$ with $0\leq l_i\leq p_i-2$ for $i=1,2,3$, and $\vec{y}=\sum_{i=1}^3k_i\vec{x}_i+k\vec{c}$ is the normal form in $\bbL(p_1,p_2,p_3)$. Then
\begin{itemize}
   \item[(1)] For $(l_3,j)\neq(0,p_3-k_3),(p_3-2,1-k_3)$, then
    \begin{equation}
\psi^j(E\langle\vec{x}\rangle(\vec{y}))=
  \left\{
  \begin{array}{ll}
   E\langle\vec{x}\rangle(\vec{y})& \;\;-k_3< j< p_3-l_3-k_3\nonumber\\
  E\langle\vec{x}-\vec{x}_3\rangle(\vec{y})& \;\;p_3-l_3-k_3\leq j\leq p_3-k_3.
 \end{array}
\right.\end{equation}
\item[(2)] For $(l_3, j)=(0,p
_3-k_3)$, then $$\psi^{p_3-k_3}(E\langle\vec{x}\rangle(\vec{y}))=\co\big(-\vec{x}_1-\vec{x}_2+\vec{c}+\vec{y}-\vec{x}_3\big)\oplus\co\big(\vec{x}+\vec{y}-\vec{x}_3\big).$$

\item[(3)] For $(l_3, j)=(p_3-2,1-k_3)$, then$$\psi^{1-k_3}(E\langle\vec{x}\rangle(\vec{y}))=\co(\vec{c}-\vec{x}_1+l_2\vec{x}_2-\vec{x}_3+\vec{y})\oplus\co(\vec{c}+l_1\vec{x}_1-\vec{x}_2-\vec{x}_3+\vec{y}).$$
\end{itemize}
\end{prop}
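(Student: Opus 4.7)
The plan is to pass to the $p_3$-cycle picture supplied by Proposition~\ref{p cycle interpretation for extension bundle}, apply the reduction $\psi^j$ diagrammatically, and then re-identify the resulting $(p_3-1)$-cycle using Propositions~\ref{$p$-cycle interpretation for line bundles} and~\ref{p cycle interpretation for extension bundle}. First I note that in the $p_3$-cycle representing $E\langle\vec x\rangle(\vec y)$, every morphism is the identity on a fixed rank-$2$ direct sum of line bundles, except at the two positions carrying the matrices $y_0$ and $y_{p_3-l_3-1}$. After absorbing the shift $\overline{\sigma}_x^{k_3}$, these two non-identity morphisms sit at the cyclic positions $-k_3$ and $p_3-l_3-1-k_3$ modulo $p_3$, separated by an ``inner'' identity run of length $p_3-l_3-2$ and an ``outer'' identity run of length $l_3$.

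Next I recall that by definition $\psi^j$ removes the object $E_j$ and replaces the two adjoining morphisms by their composition $y_j y_{j-1}$, so the whole analysis splits on which run the index $j$ lies in. In the generic case, namely $(l_3,j)\neq (0,p_3-k_3),(p_3-2,1-k_3)$, at most one of $y_{j-1}$ and $y_j$ is non-trivial, hence $y_j y_{j-1}$ is either an identity or simply equals one of $y_0,\,y_{p_3-l_3-1}$. The resulting $(p_3-1)$-cycle carries the same two matrices and can be compared, position by position, with the $(p_3-1)$-cycle of an extension bundle inside $\coh\text{-}\bbX(p_1,p_2,p_3-1)$ via Proposition~\ref{p cycle interpretation for extension bundle}. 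If $j$ lies in the inner run (i.e.\ $-k_3<j<p_3-l_3-k_3$), shortening that run by one preserves the shape of the cycle of an extension bundle for the same $\vec x$ and $\vec y$, so the output is $E\langle\vec x\rangle(\vec y)$. If instead $j$ lies in the outer run (i.e.\ $p_3-l_3-k_3\leq j\leq p_3-k_3$), shortening that run has the same effect on the cycle as replacing $l_3$ by $l_3-1$, so the output is $E\langle\vec x-\vec x_3\rangle(\vec y)$.

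The two exceptional pairs $(l_3,j)$ excluded from (1) are exactly those for which the two non-identity morphisms become cyclically adjacent and the deleted object $E_j$ sits between them, so $\psi^j$ is now forced to multiply the two matrices together. Using commutativity of $x_1$ and $x_2$, a direct calculation gives
\[
y_{p_3-l_3-1}\cdot y_0 \;=\; y_0\cdot y_{p_3-l_3-1} \;=\; (x_2^{p_2}-x_1^{p_1})\cdot I_2 .
\]
Because this composite is a scalar multiple of the identity, the rank-$2$ cycle splits as a direct sum of two rank-$1$ $(p_3-1)$-cycles, each carrying a single non-identity transition $x_2^{p_2}-x_1^{p_1}$ at the reduced position. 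Applying Proposition~\ref{$p$-cycle interpretation for line bundles} inside $\coh\text{-}\bbX(p_1,p_2,p_3-1)$ then identifies these two cycles with exactly the two line bundles displayed in parts (2) and (3) of the statement; the shift $\overline{\sigma}_x^{k_3}$ and the twist by $\phi(\vec y)$ account for the $\vec y-\vec x_3$ summand appearing in the indexing.

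The main obstacle I expect is the combinatorial bookkeeping in the generic case: one has to track the cyclic indices carefully so that, after deletion at position $j$ and reinterpretation in the smaller string group $\bbL(p_1,p_2,p_3-1)$, the reduced cycle literally coincides with the cycle produced by Proposition~\ref{p cycle interpretation for extension bundle} for the claimed extension bundle, including the correct new shift $(\overline{\sigma}_x')^{k_3'}$. Everything else reduces either to the one-line matrix computation above or to the already-established Lemma~\ref{ActOnL}.
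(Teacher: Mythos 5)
Your proposal follows essentially the same route as the paper's own proof: reduce to the case $\vec{y}=0$, use the $p_3$-cycle description of Proposition \ref{p cycle interpretation for extension bundle}, treat separately the deletions where the composed arrows involve at most one of the non-identity matrices (yielding $E\langle\vec{x}\rangle$ or $E\langle\vec{x}-\vec{x}_3\rangle$) and the two exceptional deletions where $y_0$ and $y_{p_3-l_3-1}$ must be composed, and finally transport to general $\vec{y}$ via the shift relation $\psi^j=\overline{\sigma}'\psi^{j+1}\overline{\sigma}^{-1}$. Your explicit identity $y_0\,y_{p_3-l_3-1}=y_{p_3-l_3-1}\,y_0=(x_2^{p_2}-x_1^{p_1})I_2$, which forces the reduced rank-two cycle to split into two line-bundle cycles identified by Proposition \ref{$p$-cycle interpretation for line bundles}, is correct and actually fills in a step that the paper's proof only asserts.
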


\begin{proof}
First, we prove the result for the case $\vec{y}=0$. According to Proposition \ref{p cycle interpretation for extension bundle}, the extension bundle $E\langle\vec{x}\rangle$ corresponds to the following $p_3$-cycle
$$E_0\xrightarrow{y_0}E_1\xrightarrow{y_1}\cdots\rightarrow E_{p_3-1}\xrightarrow{y_{p_3-1}}E_0(x),$$
where $y_i=\textup{id}$ for $i\neq 0,p_3-l_3-1$ and
$$y_{0}=\begin{bmatrix}
x_2^{p_2-1}&-x_1^{p_1-1}\\
x_1&-x_2
\end{bmatrix},\quad y_{p_3-l_3-1}=\begin{bmatrix}
x_2&-x_1^{p_1-1}\\
x_1&-x_2^{p_2-1}
\end{bmatrix}.$$

(Case I) If $l_3\neq 0\textup{\;or\;}p_3-2$, then $\psi^{j}(E\langle\vec{x}\rangle)$ is obtained from the $p_3$-cycle of $E\langle\vec{x}\rangle$ by deleting an arrow which is identity. For $0<j\leq p_3-1_3-1$, $\psi^{j}(E\langle\vec{x}\rangle)=E\langle\vec{x}\rangle$. For $p_3-l_3-1<j\leq p_3$, $\psi^{j}(E\langle\vec{x}\rangle)=E\langle\vec{x}-\vec{x}_3\rangle$.

(Case II) If $l_3=0$ (resp. $p_3-2$), we have $\psi^{0}(E\langle\vec{x}\rangle)$ (resp. $\psi^{1}(E\langle\vec{x}\rangle)$) is the $p_3$-cycle
$$\begin{array}{c}
    E_1\xrightarrow{y_1}E_2\xrightarrow{y_2}\cdots \xrightarrow{y_{p_3-2}}E_{p_3-1}\xrightarrow{y_0y_{p_3-1}}E_{1}(x)  \\
   \big(\textup{resp.}\; E_0\xrightarrow{y_1y_0}E_2\xrightarrow{y_2}\cdots \xrightarrow{y_{p_3-2}}E_{p_3-1}\xrightarrow{y_{p_3-1}}E_{0}(x)\big),
\end{array}$$
which corresponds to $\co(\phi(\vec{\omega})+\vec{c})\oplus\co(\phi(\vec{x}))$ (resp. $\co(\vec{c}-\vec{x}_1+l_2\vec{x}_2-\vec{x}_3)\oplus\co(\vec{c}+l_1\vec{x}_1-\vec{x}_2-\vec{x}_3)$) in $\coh\bbX(p_1,p_2,p_3-1)$.

For $l_3=0$ and $1\leq j\leq p_3-1$, we have $\psi^{j}(E\langle\vec{x}\rangle)=E\langle\vec{x}\rangle$. For $l_3=p_3-2$ and $2\leq j\leq p_3$, we have $\psi^{j}(E\langle\vec{x}\rangle)=E\langle\vec{x}-\vec{x}_3\rangle$.

Finally, for general $\vec{y}$, we note that $\psi^j=\overline{\sigma}'\psi^{j+1}\overline{\sigma}^{-1}$. Hence, $$\psi^j(E\langle\vec{x}\rangle(k_3\vec{x}_3))=\psi^j\overline{\sigma}^{k_3}(E\langle\vec{x}\rangle)=(\overline{\sigma}')^{k_3}\psi^{j+k_3}(E\langle\vec{x}\rangle).$$
By a direct calculation, the result holds.
\end{proof}
For the case of the Auslander bundles,
the following corollary follows immediately by setting taking $\vec{x}=0$.

\begin{cor}\label{extension bundles 2 reduction functor}
Let $\vec{y}=\sum_{i=1}^3k_i\vec{x}_i+k\vec{c} \in\bbL(p_1,p_2,p_3)$ be its normal form. Then
\begin{equation}
\psi^j(E(\vec{y}))=
  \left\{
  \begin{array}{ll}
   E(\vec{y})& \;\;0< j< p_3-k_3\nonumber\\
  \co(\vec{y}+\vec{c}-\vec{x}_1-\vec{x}_3)\oplus\co(\vec{y}+\vec{c}-\vec{x}_2-\vec{x}_3)& \;\;j=p_3-k_3\nonumber\\
  E(\vec{y}-\vec{x}_3)& \;\;p_3-k_3<j\leq p_3.
  \end{array}
\right.\end{equation}
\end{cor}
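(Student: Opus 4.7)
My plan is to specialise Proposition~\ref{element reduction functor} to the Auslander case $\vec{x} = 0$ (so $l_1 = l_2 = l_3 = 0$). The starting observation is that, by the very definition of the Auslander bundle, $E(\vec{y}) = E_{\co(\vec{y})}\langle 0\rangle$ coincides with $E\langle 0\rangle(\vec{y})$ in the notation of the proposition. Hence Proposition~\ref{element reduction functor} may be applied directly with $\vec{x} = 0$, and the three branches of the corollary correspond to its three cases.

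For the generic range $0 < j < p_3 - k_3$, Case~(1) of Proposition~\ref{element reduction functor} with $\vec{x} = 0$ gives $\psi^j(E(\vec{y})) = E\langle 0\rangle(\vec{y}) = E(\vec{y})$ at once. For the complementary range $p_3 - k_3 < j \leq p_3$, I would invoke the quasi-periodicity (\ref{def of fun for gen case}) together with the commutation $\psi^{j}\overline{\sigma}_x^{k_3} = (\overline{\sigma}_x')^{k_3}\psi^{j+k_3}$ used in the proof of Proposition~\ref{element reduction functor}, in order to transport $j$ back into the generic range at the cost of one extra $(\overline{\sigma}_x')^{-1}$ twist. Since $(\overline{\sigma}_x')^{-1}$ acts as subtraction of $\vec{x}_3$ on line bundles (and therefore on Auslander bundles), this produces $\psi^j(E(\vec{y})) = E(\vec{y} - \vec{x}_3)$.

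The core case is $j = p_3 - k_3$, where Case~(2) of Proposition~\ref{element reduction functor} applies (and coincides with Case~(3) in the degenerate situation $p_3 = 2$). Here the deletion merges the two non-identity arrows $y_0$ and $y_{p_3-1}$ of the Auslander $p_3$-cycle from Proposition~\ref{p cycle interpretation for extension bundle}; computed with $l_1 = l_2 = l_3 = 0$, a direct matrix multiplication shows that their product is the diagonal scalar $(x_2^{p_2} - x_1^{p_1}) \cdot I$, so the resulting $(p_3-1)$-cycle splits pointwise as a direct sum of two single line-bundle cycles. Converting these summands back into line-bundle labels in $\bbL(p_1, p_2, p_3-1)$, using the relations $p_i\vec{x}_i = (p_3-1)\vec{x}_3 = \vec{c}$, produces the symmetric decomposition stated in the corollary.

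The principal obstacle, and the step I would verify most carefully, is the final bookkeeping. One must distinguish the generator $\vec{x}_3$ of the old string group $\bbL(p_1, p_2, p_3)$ from that of the reduced one $\bbL(p_1, p_2, p_3-1)$, and track how the accompanying $(\overline{\sigma}_x')^{k_3}$ twist (needed to match the $k_3\vec{x}_3$-component of $\vec{y}$) combines with the $\phi(\vec{y})$-twist of the base cycle, in order to confirm that the resulting decomposition takes the symmetric form $\co(\vec{y}+\vec{c}-\vec{x}_i-\vec{x}_3)$, $i=1,2$, claimed by the corollary rather than an asymmetric rearrangement.
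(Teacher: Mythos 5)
Your overall route is the same as the paper's: the corollary is obtained there simply by putting $\vec{x}=0$ in Proposition \ref{element reduction functor}, and your handling of the two outer ranges is exactly that argument (case (1) for $0<j<p_3-k_3$, and the periodicity $\psi^{j}=(\overline{\sigma}_x')^{-1}\psi^{j-p_3}$ together with case (1) for $p_3-k_3<j\leq p_3$, the twist $(\overline{\sigma}_x')^{-1}$ subtracting $\vec{x}_3$). The gap is the middle case $j=p_3-k_3$, which you rightly single out as the delicate step but then settle in the wrong direction. Specializing case (2) of Proposition \ref{element reduction functor} at $l_1=l_2=l_3=0$ gives
\begin{equation*}
\psi^{p_3-k_3}(E(\vec{y}))=\co(\vec{y}+\vec{c}-\vec{x}_1-\vec{x}_2-\vec{x}_3)\oplus\co(\vec{y}-\vec{x}_3),
\end{equation*}
and your own computation forces the same answer: since $y_0y_{p_3-1}=(x_2^{p_2}-x_1^{p_1})\cdot I$ is diagonal, the reduced $(p_3-1)$-cycle splits into the two components of the \emph{middle} term $E_1=\co(\vec{c}-\vec{x}_1-\vec{x}_2+\phi(\vec{y}))\oplus\co(\phi(\vec{y}))$, and relabelling these two line-bundle cycles (the jump sits at the same position in both summands, contributing $(k_3-1)\vec{x}_3$ to each) yields precisely the asymmetric pair above. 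It never equals the symmetric pair $\co(\vec{y}+\vec{c}-\vec{x}_1-\vec{x}_3)\oplus\co(\vec{y}+\vec{c}-\vec{x}_2-\vec{x}_3)$ in $\bbL(p_1,p_2,p_3-1)$, because any matching of summands would require one of $\vec{x}_1,\vec{x}_2,\vec{c}-\vec{x}_1,\vec{c}-\vec{x}_2$ to vanish. So the bookkeeping you postpone cannot be carried out: what your argument actually proves is the asymmetric formula.

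The symmetric shape you aim for is the one coming from case (3) of Proposition \ref{element reduction functor}, i.e.\ from deleting the other special entry $E_0$; for an Auslander bundle that case occurs only when $p_3=2$, and then at the index $j=1-k_3$, not $j=p_3-k_3$, so your side remark that cases (2) and (3) coincide for $p_3=2$ is also inaccurate. In fact the corollary's middle line as printed is inconsistent with Proposition \ref{element reduction functor}(2) (it appears to have imported the case (3) formula), and the direct cycle computation supports the proposition's asymmetric version; the discrepancy is invisible only after passing to $\underline{\vect}$, where all line-bundle summands vanish. A correct write-up should therefore either prove the asymmetric formula and flag the misprint, or supply an identification of the two pairs, which does not exist at the level of $\coh$.
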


\subsection{The insertion functor acts on extension bundles}
In this subsection, we exhibit that how the insertion functor acts on the extension bundles.

\begin{prop}\label{extension bundles insertion functor}
Assume that $\vec{x}=\sum_{i=1}^3l_i\vec{x}_i$ with $0\leq l_i\leq p_i-2$ for $i=1,2,3$ and $\vec{y}=\sum_{i=1}^3k_i\vec{x}_i+k\vec{c}$ is the normal form in $\bbL(p_1,p_2,p_3)$. Then
\begin{equation}
\psi_j(E\langle\vec{x}\rangle(\vec{y}))=
  \left\{
  \begin{array}{ll}
   E\langle\vec{x}\rangle(\vec{y})& \;\;-k_3< j< p_3-l_3-k_3\nonumber\\
  E\langle\vec{x}+\vec{x}_3\rangle(\vec{y})& \;\;p_3-l_3-k_3\leq j\leq p_3-k_3.
  \end{array}
\right.\end{equation}
\end{prop}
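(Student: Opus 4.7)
The plan is to parallel the proof of Proposition \ref{element reduction functor}, now with the insertion functor in place of the reduction functor. First I would reduce to the case $\vec{y}=0$. In the $p_3$-cycle description the twist $(\vec{y})$ factors as the pointwise action by $\phi(\vec{y})=k_1\vec{x}_1+k_2\vec{x}_2+k\vec{c}$, which commutes with $\psi_j$ since it acts entrywise on cycles, followed by the shift $\overline{\sigma}_x^{k_3}$ on $\mathcal{H}(p_3;\lambda_3)$. Using the extended definition $\psi_{np+j}=\psi_j(\overline{\sigma}_x')^n$ together with the intertwining relation between $\overline{\sigma}_x$ and $\overline{\sigma}_x'$, the twist $\overline{\sigma}_x^{k_3}$ can be absorbed into a reindexing $j\rightsquigarrow j+k_3$, so that it suffices to compute $\psi_{j'}(E\langle\vec{x}\rangle)$ for all admissible $j'$.

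For $\vec{y}=0$, I would invoke Proposition \ref{p cycle interpretation for extension bundle} to represent $E\langle\vec{x}\rangle$ as the $p_3$-cycle whose arrows $y_i$ are identities except at the two positions $i=0$ and $i=p_3-l_3-1$, where they are the explicit $2\times 2$ matrices displayed there. The insertion $\psi_{j'}$ then produces a $(p_3+1)$-cycle by inserting an identity at position $j'$; every original arrow at index $\geq j'$ shifts by one. A direct case analysis on $j'$ gives: for $1\leq j'\leq p_3-l_3-1$ the two non-identity arrows end up at positions $0$ and $p_3-l_3$, whereas for $p_3-l_3\leq j'\leq p_3$ they remain at positions $0$ and $p_3-l_3-1$.

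To match these patterns with the claimed extension bundles in $\coh\bbX(p_1,p_2,p_3+1)$, I would apply Proposition \ref{p cycle interpretation for extension bundle} once more in the enlarged curve: the bundle $E\langle\vec{x}\rangle$ there has non-identity arrows at positions $0$ and $(p_3+1)-l_3-1=p_3-l_3$, while $E\langle\vec{x}+\vec{x}_3\rangle$ (with third coordinate $l_3+1$) has them at positions $0$ and $p_3-l_3-1$. Since the objects $E_i$ and the $2\times 2$ matrix morphisms carried by the cycle agree column by column with those prescribed by the target cycle description, the two cases of the previous step produce exactly $E\langle\vec{x}\rangle$ and $E\langle\vec{x}+\vec{x}_3\rangle$, respectively. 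Undoing the reindexing from the first step recovers the ranges $-k_3<j<p_3-l_3-k_3$ and $p_3-l_3-k_3\leq j\leq p_3-k_3$ stated in the proposition.

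The main delicacy I expect is at indices $j'$ where the inserted identity is adjacent to one of the two non-identity matrices, namely $j'\in\{0,p_3-l_3-1,p_3-l_3,p_3\}$: there one must verify that the resulting cycle is actually isomorphic to the canonical $p_3$-cycle of the target extension bundle, not merely that the pattern of identities agrees. This follows from the uniqueness (up to isomorphism) of the middle term of the relevant nonsplit extension, which was already invoked in the proof of Proposition \ref{p cycle interpretation for extension bundle}, combined with the relation $\psi_{p-1}=\overline{\sigma}_x\psi_0$ used to rewrite each boundary $\psi_{j'}$ in terms of an interior one.
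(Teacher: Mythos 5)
Your proposal is correct and follows essentially the same route as the paper's proof: represent $E\langle\vec{x}\rangle$ by its $p_3$-cycle from Proposition \ref{p cycle interpretation for extension bundle}, observe that $\psi_j$ merely inserts an identity arrow and reidentify the resulting $(p_3+1)$-cycle via the same proposition in weight $(p_1,p_2,p_3+1)$, then handle general $\vec{y}$ by letting $\phi(\vec{y})$ act pointwise and absorbing $\overline{\sigma}_x^{k_3}$ through the relation $\psi_{j}\circ(\overline{\sigma}')^{k_3}=\overline{\sigma}^{k_3}\psi_{j+k_3}$. Your extra care at the boundary indices is a harmless elaboration of what the paper leaves implicit.
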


\begin{proof}
Let $E_0\xrightarrow{y_0}E_1\xrightarrow{y_1}\cdots\rightarrow E_{p_3-1}\xrightarrow{y_{p_3-1}}E_0(x)$ be the $p_3$-cycle of the extension bundle $E\langle\vec{x}\rangle$ similar as in Proposition \ref{element reduction functor}. By the definition of the insertion functor, $\psi_j(E\langle\vec{x}\rangle)$ is a $(p_3+1)$-cycle obtained from the $p_3$-cycle by adding an arrow which is identity. Using Proposition \ref{p cycle interpretation for extension bundle}, we have $\psi_j(E\langle\vec{x}\rangle)=E\langle\vec{x}\rangle$
for $1\leq j< p_3-l_3$, and $\psi_j(E\langle\vec{x}\rangle)=E\langle\vec{x}+\vec{x}_3\rangle$ for $p_3-l_3\leq j\leq p_3$. Hence, we get the result immediately by using
$$\psi_{j}(E\langle\vec{x}\rangle(k_3\vec{x}_3))=\psi_{j}\circ(\overline{\sigma}')^{k_3}(E\langle\vec{x}\rangle)=\overline{\sigma}^{k_3}\psi_{j+k_3}(E\langle\vec{x}\rangle).$$\end{proof}

For the case of the Auslander bundles, the following corollary holds by taking $\vec{x}=0$.

\begin{cor}\label{extension bundles 2 insertion functor}
Let $\vec{y}=\sum_{i=1}^3k_i\vec{x}_i+k\vec{c}\in \bbL(p_1,p_2,p_3)$ be its normal form. Then
\begin{equation}
\psi_j(E(\vec{y}))=
  \left\{
  \begin{array}{ll}
   E(\vec{y})& \;\;0< j< p_3-k_3\nonumber\\
  E\langle\vec{x}_3\rangle(\vec{y})& \;\;j=p_3-k_3\nonumber\\
  E(\vec{y}+\vec{x}_3)& \;\;p_3-k_3< j\leq p_3.
  \end{array}
\right.\end{equation}
\end{cor}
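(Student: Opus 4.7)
The plan is to deduce the corollary by specializing Proposition \ref{extension bundles insertion functor} to the Auslander bundle, taking $\vec{x} = 0$ so that $l_1 = l_2 = l_3 = 0$ and $E\langle 0\rangle(\vec{y}) = E(\vec{y})$. With $l_3 = 0$ the two branches of the proposition become $\psi_j(E(\vec{y})) = E(\vec{y})$ for $-k_3 < j < p_3 - k_3$ and $\psi_{p_3 - k_3}(E(\vec{y})) = E\langle \vec{x}_3\rangle(\vec{y})$, which intersected with the natural insertion range $0 < j \leq p_3$ already recovers the first two cases of the corollary.

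The remaining range $p_3 - k_3 < j \leq p_3$ lies outside the proposition's domain when $k_3 > 0$, so I would extend it via the periodicity $\psi_{(p_3+1)+m} = \psi_m \circ \overline{\sigma}_{x}'$ (the $n = 1$ instance of $\psi_{np+j} = \psi_j (\overline{\sigma}_{x}')^n$ with $p = p_3 + 1$) together with the identity $\psi_j(E(\vec{y})) = \overline{\sigma}_x^{k_3} \psi_{j+k_3}(E(\phi(\vec{y})))$ used at the end of the proof of Proposition \ref{extension bundles insertion functor}. For $p_3 - k_3 < j \leq p_3$ the shifted index $j + k_3$ lies in $(p_3, p_3 + k_3]$, so writing $j + k_3 = (p_3 + 1) + m$ with $m \in [0, k_3 - 1]$ and identifying $\overline{\sigma}_{x}'$ with the twist by $\vec{x}_3$ (which follows from $(\overline{\sigma}_{x}')^{p_3} = \sigma_x$ and $p_3 \vec{x}_3 = \vec{c}$) converts the computation into $\overline{\sigma}_x^{k_3} \psi_m(E(\phi(\vec{y}) + \vec{x}_3))$. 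Since $E(\phi(\vec{y}) + \vec{x}_3)$ has normal-form third coordinate $\tilde k_3 = 1$ and $m \in [0, k_3 - 1] \subseteq [0, p_3 - 2]$ sits strictly inside its stability range, a second application of the proposition returns $E(\phi(\vec{y}) + \vec{x}_3)$, and re-applying $\overline{\sigma}_x^{k_3}$ finally yields $E(\vec{y} + \vec{x}_3)$, as desired.

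The main obstacle will be the careful bookkeeping of the two distinct shift automorphisms, $\overline{\sigma}_{x}'$ on the source $\coh\bbX(p_1, p_2, p_3)$ and $\overline{\sigma}_x$ on the target $\coh\bbX(p_1, p_2, p_3 + 1)$, and how they mesh with the periodicity of $\psi_j$. A related subtlety is the edge case $k_3 = p_3 - 1$, where $\phi(\vec{y}) + \vec{x}_3$ must be renormalized (with $\tilde k_3 = 0$ and $\tilde k = k + 1$); one needs to verify that applying $\overline{\sigma}_x^{k_3}$ still absorbs this renormalization correctly to produce the claimed $E(\vec{y} + \vec{x}_3)$.
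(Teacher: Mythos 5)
Your proposal is correct and follows the paper's own route: the corollary is just Proposition \ref{extension bundles insertion functor} specialized to $\vec{x}=0$, and your periodicity computation for the range $p_3-k_3<j\leq p_3$ (via $\psi_{(p_3+1)+m}=\psi_m\overline{\sigma}_x'$ and a second application of the proposition to $E(\phi(\vec{y})+\vec{x}_3)$, whose third coordinate is $1$) simply makes explicit the bookkeeping the paper compresses into ``by taking $\vec{x}=0$''. One small remark: the edge case you flag is vacuous, since $\phi(\vec{y})+\vec{x}_3$ never needs renormalization; the only point requiring care is that $\vec{y}+\vec{x}_3$ in the conclusion is to be read in the target group $\bbL(p_1,p_2,p_3+1)$, where the third coefficient $k_3+1\leq p_3$ is already in normal form, which is exactly what your computation $\overline{\sigma}_x^{k_3}E(\phi(\vec{y})+\vec{x}_3)=E(\phi(\vec{y})+(k_3+1)\vec{x}_3)$ produces.
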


\section{Tilting objects in a triangulated category via recollement}

\subsection{The adjoint pair}
In this subsection, we collect some basic facts about adjoint pair. Let $\mathcal{C}$ and $\mathcal{C'}$ be arbitrary categories, and let
\begin{eqnarray}\label{adjoint pair diagram} \xymatrix@!=3pc{\mathcal{C}
\ar@<1ex>[r]^{F} &\mathcal{C'}
\ar@<1ex>[l]^{G}
}
\end{eqnarray}
be a pair of functors between them. Then $(F,G)$ is called an \emph{adjoint pair}
if there exists a natural isomorphism
$$\Hom_{\mathcal{C'}}(FC, C')\cong\Hom_{\mathcal{C}}(C, GC'),$$ which are functorial in objects $C\in\mathcal{C}$ and $C'\in\mathcal{C'}$. We also call $F$ a left adjoint of $G$ and $G$ a right adjoint of $F$.
A sequence of functors $(F,G,H)$ is called an \emph{adjoint triple} if both of $(F,G)$ and $(G,H)$
are adjoint pairs.
The following three statements are equivalent for an adjoint pair $(F,G)$ (see \cite[Prop. I.1.3]{[GZ]}):
\begin{itemize}
 \item [(1)] $FG\cong {\rm id}_{\mathcal{C'}}$;
  \item [(2)] $G$ is fully faithful;
   \item [(3)] $F$ is a quotient functor.
\end{itemize}

\subsection{The recollement and ladder}
In this subsection, we recall the definitions of the recollement and ladder.
Let $\mathcal{C},\mathcal{C}',\mathcal{C}''$ be triangulated categories.
A {\em recollement} of $\mathcal{C}$ relative to $\mathcal{C}'$ and $\mathcal{C}''$ is expressed by the following diagram
\begin{equation}
\xymatrix@C=1cm{
\mathcal{C}'\ar[rrr]|{i_*=i_!}
&&& \mathcal{C}
\ar@/_1.5pc/[lll]|{i^*}
\ar@/^1.5pc/[lll]|{i^!}
\ar[rrr]|{j^!=j^*}
&&& \mathcal{C}''
\ar@/_1.5pc/[lll]|{j_!}
\ar@/^1.5pc/[lll]|{j_*}
}\end{equation}
given by six exact functors $i^{*}$, $i_{*}=i_!$, $i^{!}$, $j_!$, $j^!=j^{*}$, $j_{*}$ satisfying the following conditions:
\begin{enumerate}
\item
$(i^\ast,i_\ast, i^!)$ and $(j_!, j^\ast, j_\ast)$ are adjoint triples;
\item  $i_\ast,\,j_\ast,\,j_!$  are fully faithful;
\item  $i^!\circ j_\ast=0$ (and thus also $j^\ast\circ i_\ast=0$ and $i^\ast\circ j_!=0$).
\item  for each $Z\in \mathcal{C},$ the units and counits of the adjunctions yield triangles
    $$\xymatrix{i_\ast i^!Z\ar[r]& Z\ar[r]& j_\ast j^\ast Z\ar[r]& i_\ast i^!Z[1]},$$
    $$\xymatrix{j_!j^\ast Z\ar[r]& Z\ar[r]& i_\ast i^\ast Z\ar[r]&j_!j^\ast Z[1]},$$
\end{enumerate}

A {\em ladder} $\mathcal L$ is a finite or infinite diagram of triangulated categories and triangle functors
\[
\xymatrix@C=1.2cm{
\mathcal{C}'
\ar@/^-3pc/[rrr]_{\vdots}|{\varphi_{j+1}}
\ar @/^3pc/[rrr]^{\vdots}|{\varphi_{j-1}}
\ar[rrr]|{\varphi_j}
&&& \mathcal{C}
\ar@/_1.5pc/[lll]|{\varphi^j}
\ar@/^1.5pc/[lll]|{\varphi^{j+1}}
\ar[rrr]|{\psi^i}
\ar@/^-3pc/[rrr]_{\vdots}|{\psi^{i+1}}
\ar @/^3pc/[rrr]^{\vdots}|{\psi^{i-1}}
&&& \mathcal{C}''.
\ar@/_1.5pc/[lll]|{\psi_{i-1}}
\ar@/^1.5pc/[lll]|{\psi_{i}}
}
\]
such that any three consecutive rows form a recollement. The rows are labelled by a subset of $\mathbb{Z}$ and multiple occurrence of the same recollement is allowed. The {\em height} of a ladder is the number of recollements contained in it (counted with multiplicities). A recollement is considered to be a ladder of height $1$.

\begin{defn}
Let $\mathcal{C}$ be a triangulated category. An object $T\in \mathcal{C}$ is called {\em tilting} if $\Hom_\mathcal{C}(T,T[n])=0$ for all nonzero integers $n\neq 0$ and $T$ generates $\mathcal{C}$ as a thick subcategory.
\end{defn}

\begin{thm}\label{tilting in recollement}
Let \begin{equation}
\xymatrix@C=1cm{
\mathcal{C}'\ar[rrr]|{i_*=i_!}
&&& \mathcal{C}
\ar@/_1.5pc/[lll]|{i^*}
\ar@/^1.5pc/[lll]|{i^!}
\ar[rrr]|{j^!=j^*}
&&& \mathcal{C}''
\ar@/_1.5pc/[lll]|{j_!}
\ar@/^1.5pc/[lll]|{j_*}
}\end{equation}
be a recollement of triangulated categories. Assume $j_{*}$ admits a right adjoint $j^\sharp$.
Assume $T'$ and $T''$ are tilting objects in $\mathcal{C}'$ and $\mathcal{C}''$, respectively. Then the following statements are equivalent:
\begin{enumerate}
\item[(1)] $T=i_*(T') \oplus j_*(T'')$ is a tilting object in $\mathcal{C}$.
\item[(2)] $\Hom_{\mathcal{C}''}(T'', j^\sharp i_*(T')[n])=0$ for all nonzero integers $n\neq 0$.
\item[(3)] $\Hom_{\mathcal{C}'}(i^*j_*(T''), T'[n])=0$ for all nonzero integers $n\neq 0$.
\end{enumerate}
\end{thm}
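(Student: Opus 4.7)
The plan is to reduce the tilting conditions for $T=i_*(T')\oplus j_*(T'')$ to a single non-trivial Ext-vanishing condition, and then use the two different adjunctions on $j_*$ (one with $j^*$, one with $j^\sharp$) to rewrite that condition as either (2) or (3). The proof splits naturally into checking thick generation and checking the Hom-vanishing.

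For the thick generation step, I would argue as follows. Since $T'$ generates $\mathcal{C}'$ thickly and $i_*$ is a triangle functor, the thick subcategory $\langle T\rangle$ of $\mathcal{C}$ contains $i_*(\mathcal{C}')$; similarly it contains $j_*(\mathcal{C}'')$. The recollement triangle $i_*i^!(Z)\to Z\to j_*j^*(Z)$ applied to $Z=j_!(Y)$ for $Y\in\mathcal{C}''$, together with $j^*j_!\cong\textup{id}$, shows that $j_!(Y)$ belongs to $\langle T\rangle$. Then the other recollement triangle $j_!j^*(X)\to X\to i_*i^*(X)$ shows that every $X\in\mathcal{C}$ lies in $\langle T\rangle$, so $T$ generates $\mathcal{C}$.

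For the Hom-vanishing step, I would compute the four graded Hom spaces $\Hom_{\mathcal{C}}(A,B[n])$ for $A,B\in\{i_*T',j_*T''\}$ and $n\neq 0$ using the six-functor adjunctions. The adjunction $(i^*,i_*)$ and $i^*i_*\cong\textup{id}$ give $\Hom_{\mathcal{C}}(i_*T',i_*T'[n])\cong\Hom_{\mathcal{C}'}(T',T'[n])=0$; similarly $(j^*,j_*)$ and $j^*j_*\cong\textup{id}$ give $\Hom_{\mathcal{C}}(j_*T'',j_*T''[n])\cong\Hom_{\mathcal{C}''}(T'',T''[n])=0$. The cross term $\Hom_{\mathcal{C}}(i_*T',j_*T''[n])\cong\Hom_{\mathcal{C}''}(j^*i_*T',T''[n])$ vanishes automatically by $j^*i_*=0$. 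This reduces everything to the single cross term $\Hom_{\mathcal{C}}(j_*T'',i_*T'[n])$, which by the extra adjunction $(j_*,j^\sharp)$ equals $\Hom_{\mathcal{C}''}(T'',j^\sharp i_*T'[n])$ and by $(i^*,i_*)$ equals $\Hom_{\mathcal{C}'}(i^*j_*T'',T'[n])$. The equivalences (1)$\Leftrightarrow$(2)$\Leftrightarrow$(3) follow at once.

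I do not anticipate a substantive obstacle: the argument is essentially a careful bookkeeping with adjunctions and the axioms of a recollement. The subtlest point is the generation argument, where one must pass from $j_!$ to $j_*$ using both triangles of the recollement, so that the absence of $j_!(T'')$ from the definition of $T$ does not cause $\langle T\rangle$ to miss part of $\mathcal{C}$. The hypothesis that $j_*$ admits a right adjoint $j^\sharp$ is used only to produce the Hom-reinterpretation in (2); item (3) follows from the standard adjunction $(i^*,i_*)$ alone, and in particular both rewritings compute the same graded group $\Hom_{\mathcal{C}}(j_*T'',i_*T'[\bullet])$, so their equivalence with (1) is automatic.
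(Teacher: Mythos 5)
Your proposal is correct and follows essentially the same route as the paper: reduce the tilting condition to the single cross term $\Hom_{\mathcal{C}}(j_*T'',i_*T'[n])$ via the adjunctions $(i^*,i_*)$, $(j^*,j_*)$, $(j_*,j^\sharp)$ together with $j^*i_*=0$, and obtain generation from the recollement triangles. The only (harmless) difference is in the generation step, where the paper applies the triangle $i_*i^!Z\to Z\to j_*j^*Z$ directly to an arbitrary $Z\in\mathcal{C}$ (since $i^!Z\in\langle T'\rangle$ and $j^*Z\in\langle T''\rangle$), so your detour through $j_!$ and the second triangle is not needed.
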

\begin{proof}
By assumption, $(j_{*},j^\sharp)$ is an adjoint pair, and $(i^{*}, i_{*})$ is an adjoint pair according to the definition of recollement. Then, we obtain that
$$\Hom_{\mathcal{C}''}(T'', j^\sharp i_*(T')[n])=\Hom_{\mathcal{C}}(j_*(T''), i_*T'[n])=\Hom_{\mathcal{C}'}(i^{*}j_*T'', T'[n]).$$
Hence the statement $(2)$ is equivalent to $(3)$.

(1)$\Rightarrow$(2): Assume that $T=i_*(T') \oplus j_*(T'')$ is a tilting object in $\mathcal{C}$. Since $(j_*,j^\sharp)$ is an adjoint pair, we have
$$\Hom_{\mathcal{C}''}(T'', j^\sharp i_*(T')[n])=\Hom_{\mathcal{C}}(j_*T'', i_*T'[n])=0$$
for all nonzero integers $n\neq 0$.
Hence the statement $(2)$ holds.

(2)$\Rightarrow$(1): First, we prove that $\Hom_\mathcal{C}(T,T[n])=0$ for all nonzero integers $n\neq 0$. In fact, note that $i_*$ and $j_{*}$ are fully faithful, we obtain $$\Hom_{\mathcal{C}}(i_*T', i_*T'[n])=\Hom_{\mathcal{C}'}(T', T'[n])=0,$$
$$\Hom_{\mathcal{C}}(j_*T'', j_*T''[n])=\Hom_{\mathcal{C}''}(T'', T''[n])=0$$
for all nonzero integers $n\neq 0$.
According to $(j^*,j_*)$ and $(j_*,j^\sharp)$ are adjoint pairs, therefore, for all nonzero integers $n\neq 0$, we have
$$\Hom_{\mathcal{C}}(i_*T', j_{*}T''[n])=\Hom_{\mathcal{C}}(j^* i_*(T'), T''[n])=0,$$
$$\Hom_{\mathcal{C}}(j_*T'', i_*T'[n])=\Hom_{\mathcal{C}''}(T'',j^\sharp i_*T'[n])=0.$$
It follows that $\Hom_\mathcal{C}(T,T[n])=0$ for all nonzero integers $n\neq 0$.

Next, we prove that $T$ generates $\mathcal{C}$ as a thick subcategory. That is, for any $Z\in \mathcal{C}$, we need to prove $Z\in\langle T\rangle$. According to the definition of recollement, we obtain the following triangle
$$\xymatrix{i_* i^{!}Z\ar[r]& Z\ar[r]& j_*j^* Z\ar[r]&i_* i^{!}Z[1]}.$$
Since $i^{!}Z\in \mathcal{C}'=\langle T'\rangle$ and $j^* Z\in\mathcal{C}''=\langle T''\rangle$, we obtain that $Z\in\langle T\rangle.$ Hence, $\langle T\rangle=\mathcal{C}$, that is, $T$ generates $\mathcal{C}$ as a thick subcategory. It follows that $T=i_*(T') \oplus j_*(T'')$ is a tilting object in $\mathcal{C}$.
\end{proof}

\begin{cor}
\label{tilting in recollement for add}
Let \begin{equation}
\xymatrix@C=1cm{
\mathcal{C}'\ar[rrr]|{i_*=i_!}
&&& \mathcal{C}
\ar@/_1.5pc/[lll]|{i^*}
\ar@/^1.5pc/[lll]|{i^!}
\ar[rrr]|{j^!=j^*}
&&& \mathcal{C}''
\ar@/_1.5pc/[lll]|{j_!}
\ar@/^1.5pc/[lll]|{j_*}
}\end{equation}
be a recollement of triangulated categories. Assume $j_{*}$ admits a right adjoint $j^\sharp$. Assume $T'$ and $T''$ are tilting objects in $\mathcal{C}'$ and $\mathcal{C}''$, respectively. If one of the following two conditions holds:
\begin{enumerate}
\item[(i)]
$j^\sharp i_*(T')\in\mathrm{add}(T'')$;
\item[(ii)] $i^*j_*(T'')\in\mathrm{add}(T')$.
\end{enumerate}
Then $T=i_*(T') \oplus j_*(T'')$ is a tilting object in $\mathcal{C}$.
\end{cor}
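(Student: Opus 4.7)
The plan is to deduce this corollary directly from Theorem \ref{tilting in recollement} by verifying that condition (i) implies condition (2), and that condition (ii) implies condition (3), of that theorem. The key observation is that the Hom-vanishing conditions (2) and (3) are \emph{stable under taking direct summands of finite direct sums} in the second argument (resp.\ first argument), because Hom is additive and passes to summands.

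More precisely, under condition (i), since $j^\sharp i_*(T') \in \mathrm{add}(T'')$, we may write $j^\sharp i_*(T')$ as a direct summand of $(T'')^{\oplus m}$ for some $m \geq 0$. Then for every integer $n \neq 0$,
\[
\Hom_{\mathcal{C}''}(T'', j^\sharp i_*(T')[n]) \text{ is a direct summand of } \Hom_{\mathcal{C}''}(T'', (T'')^{\oplus m}[n]) = 0,
\]
where the vanishing uses that $T''$ is tilting in $\mathcal{C}''$. This gives condition (2) of Theorem \ref{tilting in recollement}, and hence $T = i_*(T') \oplus j_*(T'')$ is tilting in $\mathcal{C}$.

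The argument under condition (ii) is completely symmetric: if $i^*j_*(T'') \in \mathrm{add}(T')$, write it as a summand of $(T')^{\oplus m}$ and use that $T'$ is tilting in $\mathcal{C}'$ to obtain $\Hom_{\mathcal{C}'}(i^*j_*(T''), T'[n]) = 0$ for all $n \neq 0$, which is condition (3) of Theorem \ref{tilting in recollement}. Since there is no real obstacle here beyond recording the additivity argument, I do not foresee any difficulty; the corollary is essentially a convenient reformulation of the theorem that often applies in practice, and the existence of the right adjoint $j^\sharp$ is inherited from the hypothesis of the theorem.
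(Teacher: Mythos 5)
Your proposal is correct and follows essentially the same route as the paper: both deduce the corollary from Theorem \ref{tilting in recollement} by noting that the add-condition forces the relevant Hom-vanishing (condition (2) under (i), condition (3) under (ii)); you merely spell out the additivity-of-Hom argument that the paper leaves implicit.
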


\begin{proof}
If the conditions $(i)$ or $(ii)$ holds, then we obtain that $\Hom_{\mathcal{C}''}(T'', j^\sharp i_*(T')[n])=0$ for all nonzero integers $n\neq 0$ or $\Hom_{\mathcal{C}'}(i^*j_*(T''), T'[n])=0$ for all nonzero integers $n\neq 0$, respectively. According to Theorem \ref{tilting in recollement}, we get $T=i_*(T') \oplus j_*(T'')$ is a tilting object in $\mathcal{C}$.
\end{proof}
Similar to the theorem \ref{tilting in recollement}, we have the following corollary.

\begin{cor}
Let \begin{equation}
\xymatrix@C=1cm{
\mathcal{C}'\ar[rrr]|{i_*=i_!}
&&& \mathcal{C}
\ar@/_1.5pc/[lll]|{i^*}
\ar@/^1.5pc/[lll]|{i^!}
\ar[rrr]|{j^!=j^*}
&&& \mathcal{C}''
\ar@/_1.5pc/[lll]|{j_!}
\ar@/^1.5pc/[lll]|{j_*}
}\end{equation}
be a recollement of triangulated categories. Assume $j_{!}$ admits a left adjoint $j^\sharp$.
Assume $T'$ and $T''$ are tilting objects in $\mathcal{C}'$ and $\mathcal{C}''$, respectively. Then the following statements are equivalent:
\begin{enumerate}
\item[(1)] $T=i_*(T') \oplus j_!(T'')$ is a tilting object in $\mathcal{C}$.
\item[(2)] $\Hom_{\mathcal{C}''}(j^\sharp i_*(T'), T^{''}[n])=0$ for all nonzero integers $n\neq 0$.

\item[(3)] $\Hom_{\mathcal{C}'}(T', i^!j_!(T'')[n])=0$ for all nonzero integers $n\neq 0$.
\end{enumerate}
\end{cor}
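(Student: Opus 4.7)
The plan is to mirror the proof of Theorem \ref{tilting in recollement}, swapping the right-hand adjunction $(j_*,j^\sharp)$ for the left-hand adjunction $(j^\sharp,j_!)$, and swapping the role of the two recollement triangles. From the recollement we already have the adjoint triples $(i^*,i_*,i^!)$ and $(j_!,j^*,j_*)$; the extra hypothesis gives a further adjunction $(j^\sharp,j_!)$. In particular $i_*,\,j_!$ are fully faithful and $j^*i_*=0$.

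First I would dispose of the equivalence $(2)\Leftrightarrow(3)$ purely formally by chaining adjunctions: for every integer $n$,
\[
\Hom_{\mathcal{C}''}\bigl(j^\sharp i_*(T'),\,T''[n]\bigr)\;\cong\;\Hom_{\mathcal{C}}\bigl(i_*(T'),\,j_!(T'')[n]\bigr)\;\cong\;\Hom_{\mathcal{C}'}\bigl(T',\,i^!j_!(T'')[n]\bigr),
\]
using $(j^\sharp,j_!)$ and $(i_*,i^!)$ respectively. So both conditions say the same thing as the vanishing of $\Hom_{\mathcal{C}}(i_*T',j_!T''[n])$ for $n\neq0$.

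Next, $(1)\Rightarrow(2)$ is immediate: if $T=i_*(T')\oplus j_!(T'')$ has no self-extensions, then in particular $\Hom_{\mathcal{C}}(i_*T',j_!T''[n])=0$ for $n\neq0$, which via the first adjunction above is exactly $(2)$. For $(2)\Rightarrow(1)$ I would check separately the four orthogonality conditions packaged in $\Hom_{\mathcal{C}}(T,T[n])=0$ $(n\neq0)$. The two diagonal ones,
\[
\Hom_{\mathcal{C}}(i_*T',i_*T'[n])\cong\Hom_{\mathcal{C}'}(T',T'[n])=0,\qquad \Hom_{\mathcal{C}}(j_!T'',j_!T''[n])\cong\Hom_{\mathcal{C}''}(T'',T''[n])=0,
\]
follow from full faithfulness of $i_*$ and $j_!$ and the tilting property of $T',T''$. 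For the off-diagonal terms, the vanishing $\Hom_{\mathcal{C}}(j_!T'',i_*T'[n])\cong\Hom_{\mathcal{C}''}(T'',j^*i_*T'[n])=0$ follows from $j^*i_*=0$, while $\Hom_{\mathcal{C}}(i_*T',j_!T''[n])=0$ is exactly hypothesis $(2)$ rewritten via the adjunction $(j^\sharp,j_!)$.

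Finally, for the generation step I would invoke the \emph{other} recollement triangle (the one involving $j_!j^*$ rather than $j_*j^*$): for any $Z\in\mathcal{C}$ there is a distinguished triangle
\[
j_!j^*Z\;\longrightarrow\;Z\;\longrightarrow\;i_*i^*Z\;\longrightarrow\;j_!j^*Z[1].
\]
Since $j^*Z\in\mathcal{C}''=\langle T''\rangle$ and $i^*Z\in\mathcal{C}'=\langle T'\rangle$, we get $j_!j^*Z,\,i_*i^*Z\in\langle T\rangle$, hence $Z\in\langle T\rangle$, which shows $T$ generates $\mathcal{C}$ as a thick subcategory. Combined with the vanishing step, this gives that $T$ is a tilting object. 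I expect no genuine obstacle in the argument; the only point demanding a little care is bookkeeping the correct adjunction directions (in particular noting that $j^\sharp$ is now a \emph{left} adjoint rather than a right adjoint, which is why $(2)$ reads with $j^\sharp i_*(T')$ on the left of the Hom, and why one uses the triangle with $j_!j^*$ in generation).
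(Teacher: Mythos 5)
Your argument is correct and is exactly the dualization the paper intends: the corollary is stated without proof ("similar to Theorem \ref{tilting in recollement}"), and your mirrored version — replacing $(j_*,j^\sharp)$ by $(j^\sharp,j_!)$, using $(i_*,i^!)$ and $j^*i_*=0$ for the cross terms, and invoking the recollement triangle $j_!j^*Z\to Z\to i_*i^*Z\to j_!j^*Z[1]$ for generation — is precisely the paper's proof of the theorem adapted to this setting. Your choice of the $j_!j^*$ triangle (rather than the $j_*j^*$ one used in the theorem) is the right bookkeeping, since $T$ now contains $j_!(T'')$ rather than $j_*(T'')$.
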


\section{Tilting objects in the stable categories of vector bundles}
In this section, we combine Theorem \ref{tilting in recollement} with the ladder and recollements on the stable category of vector bundles constructed in \cite{[R]} to give a new, unified proof of the existence of
\begin{itemize}
    \item [(i)]
tilting cuboid, as shown in \cite{[KLM1]};
\item [(ii)] tilting objects consisting of Auslander bundles, as shown in \cite{[DR]}.
\end{itemize}
This approach highlights the power of gluing techniques in recovering and contextualizing these classical results. Moreover, we also construct new tilting objects in the stable category of vector bundles over a weighted projective line.

\subsection{Ladders for the stable categories of vector bundles}

Let $\bbX_t=\bbX({\mathbf{p}}, {\boldsymbol\lambda})$ be a weighted projective line of weight type ${\mathbf{p}}=(p_1, p_2,\cdots,p_t)$ with parameter data ${\boldsymbol\lambda}=(\lambda_1,\lambda_2\cdots,\lambda_t)$.
Denote by $\vect\bbX_t$ the full subcategory of $\coh\bbX_t$ consisting of all the vector bundles.
We introduce briefly a Frobenius exact structure on $\vect\bbX_t$, for more details we refer to \cite[Section 3]{[KLM1]}.

A sequence $\xi: 0\to X'\to X\to X''\to 0$ in $\vect\bbX_t$ is called \emph{distinguished exact}
if for each line bundle $L$ the induced sequence $\Hom_{\coh\bbX_t}(L, \xi):$
$$
0\to \Hom_{\coh\bbX_t}(L, X')\to \Hom_{\coh\bbX_t}(L, X)\to \Hom_{\coh\bbX_t}(L, X'')\to 0$$ is exact. By using Serre duality, we know that $\xi$ is distinguished exact if and only if $\Hom_{\coh\bbX_t}(\xi, L)$ is exact for each line bundle $L$.
According to \cite[Proposition 3.2]{[KLM1]}, the distinguished exact sequences define a Frobenius exact structure on
$\vect\bbX_t$, such that the indecomposable projectives (resp. injectives) are exactly the line bundles. It follows that the associated stable category $\underline{\vect}\bbX_t$ obtained from $\vect\bbX_t$ by factoring out all the line bundles is a triangulated category.

For simplicity, we denote by
$\bbY:=\bbX_{t-1}=\bbX(p_1, p_2,\cdots, p_{t-1}; \lambda_1,\lambda_2, \cdots,\lambda_{t-1})$, and use the notation $\bbY(q):=\bbX(p_1, p_2,\cdots, p_{t-1},
q; \lambda_1, \lambda_2,\cdots,\lambda_t)$ for any $1\leq q\leq p:=p_t$, {where $\lambda_t$ is different from any of $\lambda_1,\lambda_2,\cdots, \lambda_{t-1}$.}
It follows that $\coh \bbY(q)=(\coh\bbY)(q; \lambda_{t})$.

According to \cite[Lemma 4.3]{[R]}, for any integer $j$, the functors $\psi^j$ and $\psi_j$ induce triangulated functors between the stable triangulated categories:
$$\bar{\psi}^j: \underline{\vect}\bbY(p) \to\underline{\vect}\bbY(p-1)\quad\text{and}\quad
\bar{\psi}_j: \underline{\vect}\bbY(p-1) \to\underline{\vect}\bbY(p).$$

The stable category of vector bundles over a weighted projective line of weight triple case is of particular interest, see \cite{[KLM1]}. From now on, we assume $\bbY(p)$ is of weight triple. In this case, $\bbY$ has weight type $(p_1, p_2)$. For any $0\leq q<p$ and any integer sequence $J_{q}=(j_1, j_2,\cdots, j_q)$
with $j_1<j_2<\cdots<j_{q}<j_1+p$, we define $\bar{\psi}^{J_{q}}$and $\bar{\psi}_{J_{q}}$ as the following compositions:
$$\bar{\psi}^{J_{q}}=\bar{\psi}^{j_1}\bar{\psi}^{j_2}\cdots \bar{\psi}^{j_q}: \underline{\vect}\bbY(p) \to\underline{\vect}\bbY(p-q)$$ and
$$\bar{\psi}_{J_{q}}=\bar{\psi}_{j_q}\cdots \bar{\psi}_{j_2}\bar{\psi}_{j_1}: \underline{\vect}\bbY(p-q) \to\underline{\vect}\bbY(p).$$

\begin{thm}\textup{\cite[Theorem 4.9]{[R]}}\label{main theorem for stab cat} Assume $\bbY$ has weight type $(p_1, p_2)$. For any $0\leq q<p$, let $J_{q}=(1,2,\cdots, q)$ and $J_{q}^c=(q+1,q+2,\cdots,p-1)$.
Then the following diagram is an infinite ladder of period $p$:
$$\xymatrix@C=0.5cm{
  \underline{\vect}\bbY(q+1)
\ar@/_3pc/[rrr]_{\vdots}|{\bar{\psi}_{J_{q}^c+\underline{1}}}
\ar@/^3pc/[rrr]^{\vdots}|{\bar{\psi}_{J_{q}^c-\underline{1}}}
\ar[rrr]|{\bar{\psi}_{{J_{q}^c}}}
&&& \underline{\vect}\bbY(p)
\ar@/_1.5pc/[lll]|{\bar{\psi}^{{J_{q}^c}}}
\ar@/^1.5pc/[lll]|{\bar{\psi}^{J_q^c+{\underline{1}}}}
\ar[rrr]|{\bar{\psi}^{J_{q}}}
\ar@/_3pc/[rrr]_{\vdots}|{\bar{\psi}^{{J_{q}}+{\underline{1}}}}
\ar@/^3pc/[rrr]^{\vdots}|{\bar{\psi}^{{J_{q}}-{\underline{1}}}}
&&& \underline{\vect}\bbY(p-q).
\ar@/_1.5pc/[lll]|{\bar{\psi}_{{J_{q}}-{\underline{1}}}}
\ar@/^1.5pc/[lll]|{\bar{\psi}_{J_{q}}}
}$$
\end{thm}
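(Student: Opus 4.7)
The plan is to build the ladder in three stages: establish single-step adjunctions at the $p$-cycle level, descend them to the stable categories, and then assemble each triple of consecutive rows into a recollement while tracking the period-$p$ behavior arising from the shift automorphism $\overline{\sigma}_x$.

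First I would verify at the level of $\coh\bbY(p)$ that $\psi^j$ and $\psi_j$ fit into an infinite chain $\ldots, \psi_{j-1}, \psi^{j-1}, \psi_j, \psi^j, \psi_{j+1}, \psi^{j+1}, \ldots$ of alternating reductions and insertions in which each consecutive pair is adjoint, i.e.\ both $(\psi_j, \psi^j)$ and $(\psi^j, \psi_{j+1})$ are adjoint pairs. Each of these adjunctions reduces to a short diagram chase on $p$-cycles exploiting the universal property of the identity arrow that $\psi_j$ inserts at position $j$. The identities $\psi^{p-1} = \psi^0 \overline{\sigma}_x^{-1}$ and $\psi_{p-1} = \overline{\sigma}_x \psi_0$, together with their generalizations in \eqref{def of fun for gen case}, then render this chain periodic of period $p$ up to the shift $\overline{\sigma}_x$.

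Next I would descend these adjunctions to the stable categories $\underline{\vect}\bbY(q)$. The explicit formulas in Lemma \ref{ActOnL} and Propositions \ref{element reduction functor} and \ref{extension bundles insertion functor} show that $\psi^j$ and $\psi_j$ carry vector bundles to vector bundles and line bundles to sums of line bundles, so by \cite[Lemma~4.3]{[R]} they induce triangle functors $\bar{\psi}^j, \bar{\psi}_j$ on the stable quotients. Since line bundles are exactly the projective-injectives of the Frobenius structure on $\vect\bbY(q)$, the $\Hom$-adjunction identities descend cleanly to the stable $\Hom$. Composing adjoint pairs yields the infinite chain on the left, $(\ldots, \bar{\psi}^{J_q^c}, \bar{\psi}_{J_q^c}, \bar{\psi}^{J_q^c+\underline{1}}, \bar{\psi}_{J_q^c+\underline{1}}, \ldots)$, and its analogue on the right, since $(\bar{\psi}_{J_q^c}, \bar{\psi}^{J_q^c})$ is the iterated composition of the adjoint pairs $(\bar{\psi}_{j}, \bar{\psi}^{j})$ for $j \in J_q^c$. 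Fully faithfulness of $\bar{\psi}_{J_q^c}$ and $\bar{\psi}_{J_q}$ is inherited from $\bar{\psi}^j \bar{\psi}_j \cong \mathrm{id}$.

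The core task is then to verify that three consecutive rows of the chain actually constitute a recollement. The main obstacle is the orthogonality condition $\bar{\psi}^{J_q} \circ \bar{\psi}_{J_q^c} = 0$ in the stable category: an object in the image of $\bar{\psi}_{J_q^c}$ is a $p$-cycle in which positions $q+1,\ldots,p-1$ are connected by identity arrows arising from the successive insertions, and reducing this at the positions $J_q = (1,\ldots,q)$ collapses it to a $(p-q)$-cycle whose arrows are all identities, which by Proposition \ref{$p$-cycle interpretation for line bundles} is a direct sum of line bundles and hence zero in $\underline{\vect}\bbY(p-q)$. Together with the adjoint-triple structure, this orthogonality forces the two canonical recollement triangles via the standard characterization of recollements through adjoint triples. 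Finally, the period-$p$ feature of the ladder is read off directly from \eqref{def of fun for gen case}: the functors at indices shifted by $p$ differ only by composition with the autoequivalence $\overline{\sigma}_x$, so the recollement at row $j+p$ is identified with the one at row $j$ up to this shift.
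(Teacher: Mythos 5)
Note first that the paper does not prove this statement: it is quoted verbatim from \cite[Theorem 4.9]{[R]}, so your proposal can only be compared with the strategy of that reference, which it follows in outline (adjunctions between reduction and insertion functors, descent to the stable categories, verification of the recollement axioms, periodicity from \eqref{def of fun for gen case}). Within that outline, however, there are concrete errors. The adjunctions are stated with the wrong handedness: a diagram chase on cycles (using that a morphism into, resp.\ out of, the inserted identity arrow is determined by its neighbouring component) gives the adjoint triple $(\psi^j,\psi_j,\psi^{j+1})$, i.e.\ $\psi^j\dashv\psi_j\dashv\psi^{j+1}$, not ``$(\psi_j,\psi^j)$ and $(\psi^j,\psi_{j+1})$''. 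This is not a harmless relabelling: composing your adjunctions over $J_q^c$ would make $\bar{\psi}^{J_q^c-\underline{1}}$ and $\bar{\psi}^{J_q^c}$ the left and right adjoints of $\bar{\psi}_{J_q^c}$, whereas the displayed ladder (and the paper's own later use of $j^\sharp=\bar{\psi}^{2}$ as the right adjoint of $j_*=\bar{\psi}_{1}$ in Propositions \ref{CubTilting} and \ref{DRTilting}) requires $\bar{\psi}^{J_q^c}$ and $\bar{\psi}^{J_q^c+\underline{1}}$; so as written your argument would establish a ladder shifted by $\underline{1}$, which is a different diagram from the one in the statement.

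Second, the orthogonality step is both misdescribed and misses the point where the hypothesis on $\bbY$ enters. For a $(q+1)$-cycle $E'$, the object $\psi^{J_q}\psi_{J_q^c}(E')$ is not a cycle ``whose arrows are all identities'': deleting the slots $1,\dots,q$ produces one arrow equal to the composite $x'_q\cdots x'_0=x_{E'_0}$, so the result is (a power of $\overline{\sigma}_x$ applied to) $\iota(F)$ with $F=E'_0$ a vector bundle over $\bbY$. Such an object is a direct sum of line bundles---hence zero in $\underline{\vect}\bbY(p-q)$---only because $\bbY$ has weight type $(p_1,p_2)$, so that every vector bundle on $\bbY$ splits into line bundles and Proposition \ref{$p$-cycle interpretation for line bundles} applies summand by summand; your sketch never invokes this standing hypothesis, and without it the vanishing fails. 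Finally, ``adjoint triples plus $j^{*}i_{*}=0$'' does not by itself yield a recollement: orthogonality only gives $\mathrm{im}\,\bar{\psi}_{J_q^c}\subseteq\ker\bar{\psi}^{J_q}$, and one must still prove the reverse inclusion, equivalently construct the two glueing triangles for every object of $\underline{\vect}\bbY(p)$; in \cite{[R]} this is done via explicit distinguished exact sequences comparing $E$ with $\psi_{J}\psi^{J}(E)$ up to line-bundle summands, and this technical core is absent from your proposal.
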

Here, the sequence $\underline{1}$ has the same size as $J_{q}$ and has entry 1's everywhere, and $J_{q}\pm \underline{1}$ are defined via componentwise addition.

Theorem \ref{tilting in recollement} and Theorem \ref{main theorem for stab cat} together give an effective recipe for producing tilting objects in the stable category $\underline{\vect}\bbX(p_1, p_2, p_3)$, because all the functors in the ladder of Theorem \ref{main theorem for stab cat} are explicitly described. Using this machinery we recover both the tilting cuboid of \cite{[KLM1]} and the tilting objects formed by Auslander bundles as shown in \cite{[DR]}.
Moreover, we construct new tilting objects in the stable category $\underline{\vect}\bbX(2, p_2, p_3)$.

\subsection{New proof for tilting cuboid}
In this subsection, we give a new proof of the tilting cuboid from \cite{[KLM1]}, showing how it is glued from lines and rectangles.

\begin{prop}\label{CubTilting}\textup{\cite[Theorem 6.1]{[KLM1]}}
Assume $\vec{\delta}=\sum_{i=1}^{3}(p_i-2)\vec{x}_i\in \bbL(p_1,p_2,p_3)$. Then
$T_{{\rm cub}}=\oplus_{0\leq \vec{x}\leq \vec{\delta}} E\langle \vec{x}\rangle$ is a tilting object in $\underline{\vect}\bbX(p_1, p_2, p_3).$ \end{prop}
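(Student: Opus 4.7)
The plan is to induct on $p_3\ge 2$. Since $p_1,p_2,p_3$ play symmetric roles in $T_{\rm cub}$, whenever $p_3$ already equals $2$ I restart the argument on $p_2$ (and then $p_1$), so the only essential base case is $(p_1,p_2,p_3)=(2,2,2)$; there $T_{\rm cub}=E(\co)$ is a single Auslander bundle, and its tiltingness in $\underline{\vect}\bbX(2,2,2)$ can be verified directly. For the inductive step with $p_3\ge 3$, I apply the ladder of Theorem~\ref{main theorem for stab cat} with $\bbY=\bbX(p_1,p_2)$, $q=1$, $J_1=(1)$, and $J_1^c=(2,3,\ldots,p_3-1)$, extracting the recollement $\underline{\vect}\bbY(2)\xrightarrow{\;i_*\;}\underline{\vect}\bbY(p_3)\xrightarrow{\;j^*\;}\underline{\vect}\bbY(p_3-1)$ whose functors are $i_*=\bar\psi_{J_1^c}=\bar\psi_{p_3-1}\circ\cdots\circ\bar\psi_2$, $j^*=\bar\psi^{1}$, $j_*=\bar\psi_{1}$, and left adjoint $i^*=\bar\psi^{J_1^c}=\bar\psi^{2}\circ\cdots\circ\bar\psi^{p_3-1}$. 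Take $T'$ to be the cuboid in $\underline{\vect}\bbY(2)$ and $T''$ the cuboid in $\underline{\vect}\bbY(p_3-1)$; both are tilting by the inductive hypothesis. The aim is to show $i_*(T')\oplus j_*(T'')=T_{\rm cub}$ and $i^*j_*(T'')\in\mathrm{add}(T')$, after which Corollary~\ref{tilting in recollement for add}(ii) gives the result.

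For the first claim, iterate Proposition~\ref{extension bundles insertion functor}. When $j_*=\bar\psi_{1}$ acts on a summand $E\langle l_1\vec{x}_1+l_2\vec{x}_2+l_3\vec{x}_3\rangle$ of $T''$, the condition $-k_3<j<p_3-l_3-k_3$ reads $0<1<p_3-1-l_3$, which holds because $l_3\le p_3-3$; hence $j_*$ acts as the identity on each summand, and $j_*(T'')$ is realised in $\underline{\vect}\bbY(p_3)$ as the sub-cuboid $\{0\le l_3\le p_3-3\}$ of $T_{\rm cub}$. For $i_*$, I track the successive insertions $\bar\psi_2,\bar\psi_3,\ldots,\bar\psi_{p_3-1}$: when $\bar\psi_k$ is applied, the current bundle sits in $\underline{\vect}\bbY(k)$ with $\vec{x}_3$-coefficient $k-2$, so the inequality $p_3-l_3-k_3\le j\le p_3-k_3$ of Proposition~\ref{extension bundles insertion functor} becomes $2\le k\le k$, and the second branch increments the $\vec{x}_3$-coefficient by $1$. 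After all $p_3-2$ insertions, $i_*(T')=\bigoplus_{l_1,l_2}E\langle l_1\vec{x}_1+l_2\vec{x}_2+(p_3-2)\vec{x}_3\rangle$, the top slice $\{l_3=p_3-2\}$ of $T_{\rm cub}$. Combining, $i_*(T')\oplus j_*(T'')=T_{\rm cub}$.

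For the second claim, iterate Proposition~\ref{element reduction functor}. Applying $\bar\psi^{p_3-1},\bar\psi^{p_3-2},\ldots,\bar\psi^{2}$ to a summand of $j_*(T'')$ in turn, at each reduction $\bar\psi^{k}$ the source weight equals $k+1$, so the two special cases $(l_3,j)=(0,p_3-k_3)$ and $(l_3,j)=(p_3-2,1-k_3)$ of Proposition~\ref{element reduction functor} are avoided (because $j=k\ne k+1$ and $k\ge 2$), and case (1) tells us that the $\vec{x}_3$-coefficient drops by $1$ when positive and stays $0$ otherwise. Starting from summands of $j_*(T'')$ with $l_3\in\{0,1,\ldots,p_3-3\}$ and performing $p_3-2$ such reductions transports every summand to $E\langle l_1\vec{x}_1+l_2\vec{x}_2\rangle$ in $\underline{\vect}\bbY(2)$, giving $i^*j_*(T'')\cong (T')^{\oplus(p_3-2)}\in\mathrm{add}(T')$. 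The main obstacle is the careful bookkeeping in these iterations---in particular tracking the current source weight, the position index, and the running $\vec{x}_3$-coefficient so as to avoid the special cases of Proposition~\ref{element reduction functor}---together with the direct handling of the $(2,2,2)$ base case.
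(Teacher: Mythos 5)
Your proposal is correct and follows essentially the same route as the paper: induction on the weights (using the symmetry of the cuboid to reduce to the base case $(2,2,2)$), the $q=1$ ladder of Theorem~\ref{main theorem for stab cat}, the decomposition of $T_{\rm cub}$ into the top slice $\bar{\psi}_{J_1^c}(T')$ and the sub-cuboid $\bar{\psi}_1(T'')$, and Corollary~\ref{tilting in recollement for add}. The only (harmless) difference is that you verify condition (ii) by computing $i^*j_*(T'')=\bar{\psi}^{J_1^c}\bar{\psi}_1(T'')\in\mathrm{add}(T')$, whereas the paper verifies condition (i) via $\bar{\psi}^{2}\bar{\psi}_{J_1^c}(T')\in\mathrm{add}(T'')$; your bookkeeping with Propositions~\ref{element reduction functor} and \ref{extension bundles insertion functor} checks out.
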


\begin{proof}
% For $1\leq i\leq 3$ and for any $2\leq a_i\leq p_i$, d\
Denote by $$T_{(a_1,a_2,a_3)}=\bigoplus_{i=0}^{a_1-2}\bigoplus_{j=0}^{a_2-2}\bigoplus_{k=0}^{a_3-2} E\langle i\vec{x}_1+j\vec{x}_2+k\vec{x}_3\rangle.$$
Then $T_{{\rm cub}}=T_{(p_1,p_2,p_3)}$.

First, we prove $T_{(2,2,p_3)}$ is a tilting object in $\underline{\vect}\bbX(2,2,p_3)$ by induction on $p_3$. It is obvious that $T_{(2,2,2)}=E$ is a tilting object in $\underline{\vect}\bbX(2,2,2)$. Assume $T_{(2,2,p_3-1)}$ is a tilting object in $\underline{\vect}\bbX(2,2,p_3-1)$. We consider the following ladder, where ${J_{1}^c}=(2,3,\cdots,p_3-1)$.
\begin{figure}[ht]
    $$\xymatrix@C=0.5cm{
  \underline{\vect}\bbX(2,2,2)
\ar@/_3pc/[rrr]_{\vdots}|{\bar{\psi}_{J_{1}^c+\underline{1}}}
\ar@/^3pc/[rrr]^{\vdots}|{\bar{\psi}_{J_{1}^c-\underline{1}}}
\ar[rrr]|{\bar{\psi}_{J_{1}^c}}
&&& \underline{\vect}\bbX(2,2,p_3)
\ar@/_1.5pc/[lll]|{\bar{\psi}^{J_{1}^c}}
\ar@/^1.5pc/[lll]|{\bar{\psi}^{J_{1}^c+\underline{1}}}
\ar[rrr]|{\bar{\psi}^{1}}
\ar@/_3pc/[rrr]_{\vdots}|{\bar{\psi}^{2}}
\ar@/^3pc/[rrr]^{\vdots}|{\bar{\psi}^{0}}
&&& \underline{\vect}\bbX(2,2,p_3-1)
\ar@/_1.5pc/[lll]|{\bar{\psi}_{0}}
\ar@/^1.5pc/[lll]|{\bar{\psi}_{1}}
}$$
\caption{Ladder for $\underline{\vect}\bbX(2,2,p_3)$}
    \label{LadderFor22p}
\end{figure}

 \noindent By Proposition \ref{extension bundles insertion functor}, we have $${\bar{\psi}_{J_{1}^c}}(T_{(2,2,2)})=\bar{\psi}_{p_3-1}\bar{\psi}_{p_3-2}\cdots\bar{\psi}_{2}(E)=E\langle (p_3-2)\vec{x}_3\rangle,$$ and $$\bar{\psi}_1(T_{(2,2,p_3-1)})=\oplus_{0\leq k\leq p_3-3} E\langle k\vec{x}_3\rangle.$$

By Proposition \ref{element reduction functor}, we obtain
$$\bar{\psi}^{2}{\bar{\psi}_{J_{1}^c}}(T_{(2,2,2)})=\bar{\psi}^{2}(E\langle (p_3-2)\vec{x}_3\rangle)=E\langle (p_3-3)\vec{x}_3\rangle,$$ which is a direct summand of $T_{(2,2,p_3-1)}$. According to Corollary
\ref{tilting in recollement for add}, we obtain that $$T_{(2,2,p_3)}={\bar{\psi}_{J_{1}^c}}(T_{(2,2,2)})\oplus\bar{\psi}_1(T_{(2,2,p_3-1)})$$ is a tilting object in $\underline{\vect}\bbX(2,2,p_3)$.

Next, we consider the following ladder, see Figure \ref{LadderFor2p1p2}.
\begin{figure}[ht]
    $$\xymatrix@C=0.5cm{
  \underline{\vect}\bbX(2,p_2,2)
\ar@/_3pc/[rrr]_{\vdots}|{\bar{\psi}_{J_{1}^c+\underline{1}}}
\ar@/^3pc/[rrr]^{\vdots}|{\bar{\psi}_{J_{1}^c-\underline{1}}}
\ar[rrr]|{\bar{\psi}_{J_{1}^c}}
&&& \underline{\vect}\bbX(2,p_2,p_3)
\ar@/_1.5pc/[lll]|{\bar{\psi}^{J_{1}^c}}
\ar@/^1.5pc/[lll]|{\bar{\psi}^{J_{1}^c+\underline{1}}}
\ar[rrr]|{\bar{\psi}^{1}}
\ar@/_3pc/[rrr]_{\vdots}|{\bar{\psi}^{2}}
\ar@/^3pc/[rrr]^{\vdots}|{\bar{\psi}^{0}}
&&& \underline{\vect}\bbX(2,p_2,p_3-1)
\ar@/_1.5pc/[lll]|{\bar{\psi}_{0}}
\ar@/^1.5pc/[lll]|{\bar{\psi}_{1}}
}$$
\caption{Ladder for $\underline{\vect}\bbX(2,p_2,p_3)$}
    \label{LadderFor2p1p2}
\end{figure}

By Propositions \ref{element reduction functor} and \ref{extension bundles insertion functor}, we observe that the coefficients of $\vec{x}_1$ and $\vec{x}_2$ in $\vec{x}$ and $\vec{y}$ do not affect the process of element change under the reduction functor and the insertion functor. According to the first step, we get that $\bigoplus_{i=0}^{p_2-2} E\langle i\vec{x}_2\rangle$ is a tilting object in $\underline{\vect}\bbX(2,p_2,2)$. By Proposition \ref{extension bundles insertion functor}, we obtain that
$${\bar{\psi}_{J_{1}^c}}(T_{(2,p_2,2)})=\bigoplus_{i=0}^{p_2-2} E\langle i\vec{x}_2+(p_3-2)\vec{x}_3\rangle,\;\;\bar{\psi}_1(T_{(2,p_2,p_3-1)})=\bigoplus_{j=0}^{p_2-2}\bigoplus_{k=0}^{p_3-3} E\langle j\vec{x}_2+k\vec{x}_3\rangle$$
hold for the ladder, and by Proposition \ref{element reduction functor},
$$\bar{\psi}^{2}\bar{\psi}_{J_{1}^c}(T_{(2,p_2,2)})=\bar{\psi}^{2}(\bigoplus_{i=0}^{p_2-2} E\langle i\vec{x}_2+(p_3-2)\vec{x}_3\rangle)=\bigoplus_{i=0}^{p_2-2} E\langle i\vec{x}_2+(p_3-3)\vec{x}_3\rangle.$$
It is a direct summand of $T_{(2,p_2,p_3-1)}$. By induction, $T_{(2,p_2,p_3-1)}$ is a tilting object in the category $\underline{\vect}\bbX(2,p_2,p_3-1)$.
Hence, by Corollary \ref{tilting in recollement for add}, we obtain that $T_{(2,p_2,p_3)}={\bar{\psi}_{J_{1}^c}}(T_{(2,p_2,2)})\oplus\bar{\psi}_1(T_{(2,p_2,p_3-1)})$ is a tilting object in $\underline{\vect}\bbX(2,p_2,p_3)$.

Finally, we consider the following ladder, see Figure \ref{LadderForp1p2p3}.

\begin{figure}[ht]
$$
\xymatrix@C=0.5cm{
  \underline{\vect}\bbX(p_1,p_2,2)
\ar@/_3pc/[rrr]_{\vdots}|{\bar{\psi}_{J_{1}^c+\underline{1}}}
\ar@/^3pc/[rrr]^{\vdots}|{\bar{\psi}_{J_{1}^c-\underline{1}}}
\ar[rrr]|{\bar{\psi}_{J_{1}^c}}
&&& \underline{\vect}\bbX(p_1,p_2,p_3)
\ar@/_1.5pc/[lll]|{\bar{\psi}^{J_{1}^c}}
\ar@/^1.5pc/[lll]|{\bar{\psi}^{J_{1}^c+\underline{1}}}
\ar[rrr]|{\bar{\psi^{1}}}
\ar@/_3pc/[rrr]_{\vdots}|{\bar{\psi^{2}}}
\ar@/^3pc/[rrr]^{\vdots}|{\bar{\psi^{0}}}
&&& \underline{\vect}\bbX(p_1,p_2,p_3-1)
\ar@/_1.5pc/[lll]|{\bar{\psi_{0}}}
\ar@/^1.5pc/[lll]|{\bar{\psi_{1}}}
}
$$
\caption{Ladder for $\underline{\vect}\bbX(p_1,p_2,p_3)$}
    \label{LadderForp1p2p3}
\end{figure}
According to the second step, we have $\bigoplus_{i=0}^{p_1-2}\bigoplus_{j=0}^{p_2-2} E\langle i\vec{x}_1+j\vec{x}_2\rangle$ is a tilting object in $\underline{\vect}\bbX(p_1,p_2,p_3)$. By Proposition \ref{extension bundles insertion functor}, we obtain that
\begin{align*}
    {\bar{\psi}_{J_{1}^c}}(T_{(p_1,p_2,2)})=\bigoplus_{i=0}^{p_1-2}\bigoplus_{j=0}^{p_2-2} E\langle i\vec{x}_1+j\vec{x}_2+(p_3-2)\vec{x}_3\rangle,\\
    \bar{\psi}_1(T_{(p_1,p_2,p_3-1)})=\bigoplus_{i=0}^{p_1-2}\bigoplus_{j=0}^{p_2-2}\bigoplus_{k=0}^{p_3-3} E\langle i\vec{x}_1+j\vec{x}_2+k\vec{x}_3\rangle
\end{align*}
hold, and by Proposition \ref{element reduction functor},
$$\bar{\psi}^{2}\bar{\psi}_{J_{1}^c}(T_{(p_1,p_2,2)})=\bar{\psi}^{2}(\bigoplus_{i=0}^{p_1-2}\bigoplus_{j=0}^{p_2-2} E\langle i\vec{x}_1+j\vec{x}_2+(p_3-2)\vec{x}_3\rangle)=\bigoplus_{i=0}^{p_1-2}\bigoplus_{j=0}^{p_2-2} E\langle i\vec{x}_1+j\vec{x}_2+(p_3-3)\vec{x}_3\rangle,$$
which is a direct summand of $T_{(p_1,p_2,p_3-1)}$. By induction, $T_{(p_1,p_2,p_3-1)}$ is a tilting object in $\underline{\vect}\bbX(p_1,p_2,p_3-1)$.
Hence, by Corollary \ref{tilting in recollement for add}, we obtain that $T_{(p_1,p_2,p_3)}={\bar{\psi}_{J_{1}^c}}(T_{(p_1,p_2,2)})\oplus\bar{\psi}_1(T_{(p_1,p_2,p_3-1)})$ is a tilting object in $\underline{\vect}\bbX(p_1,p_2,p_3)$. Therefore, $T_{{\rm cub}}=T_{(p_1,p_2,p_3)}$ is a tilting object in $\underline{\vect}\bbX(p_1, p_2, p_3)$.
\end{proof}

\subsection{New proof for tilting objects consisting of Auslander bundles}
In this subsection, we provide an essential proof for tilting objects consisting of Auslander bundles \cite{[DR]}.

Denote by $\overline{x}_i=\vec{x}_i+\vec{\omega}$ for $i=1,2,3$. In $\underline{\vect}\bbX(2, p_2, p_3)$, there are two tilting objects  consisting of Auslander bundles $\bigoplus\limits_{i=0}^{p_3-2}\bigoplus\limits_{j=0}^{p_2-2}E(i\overline{x}_2+j\overline{x}_3)$ and $\bigoplus\limits_{i=0}^{p_3-2}\bigoplus\limits_{j=0}^{p_2-2}E(i\overline{x}_1+j\overline{x}_3)$
established in \cite{[DR]}. Here, we present an alternative proof of this fact by employing Theorem \ref{tilting in recollement} and Theorem \ref{main theorem for stab cat}.

\begin{prop}\label{DRTilting}
There are two tilting object in $\underline{\vect}\bbX(2, p_2, p_3)$
\begin{itemize}
  \item[(1)] $T_1=\oplus_{0\leq i\leq p_3-2, 0\leq j\leq p_2-2} E(i\overline{x}_2+j\overline{x}_3)$;
  \item[(2)] $T_2=\oplus_{0\leq i\leq p_3-2, 0\leq j\leq p_2-2} E(i\overline{x}_1+j\overline{x}_3)$.
   \end{itemize}
      \end{prop}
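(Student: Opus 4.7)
The plan mirrors that of Proposition~\ref{CubTilting}: I induct on $p_3$, applying Theorem~\ref{tilting in recollement} to the $q=1$ recollement from the ladder of Theorem~\ref{main theorem for stab cat},
\[
\xymatrix@C=0.45cm{
  \underline{\vect}\bbX(2,p_2,2)
\ar[rrr]|{\bar{\psi}_{J_{1}^c}}
&&& \underline{\vect}\bbX(2,p_2,p_3)
\ar@/_1.5pc/[lll]|{\bar{\psi}^{J_{1}^c}}
\ar@/^1.5pc/[lll]|{\bar{\psi}^{J_{1}^c+\underline{1}}}
\ar[rrr]|{\bar{\psi}^{1}}
&&& \underline{\vect}\bbX(2,p_2,p_3-1),
\ar@/_1.5pc/[lll]|{\bar{\psi}_{0}}
\ar@/^1.5pc/[lll]|{\bar{\psi}_{1}}
}
\]
in which $j_*=\bar{\psi}_1$ admits the right adjoint $j^\sharp=\bar{\psi}^2$. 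The bundle arithmetic is carried out with Corollaries~\ref{extension bundles 2 reduction functor} and~\ref{extension bundles 2 insertion functor} in place of the extension-bundle versions used in the proof of Proposition~\ref{CubTilting}.

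For the base case $p_3=2$, I need $T_1^{(2)}=\bigoplus_{j=0}^{p_2-2}E(j\overline{x}_3)$ (which coincides with $T_2^{(2)}$) to be tilting in $\underline{\vect}\bbX(2,p_2,2)$. Permuting the last two weights gives an isomorphism $\bbX(2,p_2,2)\cong\bbX(2,2,p_2)$ and hence an equivalence $\underline{\vect}\bbX(2,p_2,2)\simeq\underline{\vect}\bbX(2,2,p_2)$ that swaps $\overline{x}_2\leftrightarrow\overline{x}_3$, turning $T_1^{(2)}$ into $\bigoplus_{i=0}^{p_2-2} E(i\overline{x}_2)$ in $\underline{\vect}\bbX(2,2,p_2)$, which is the case $p_2=2$ of the theorem (with the parameters $(p_2,p_3)$ swapped). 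A nested induction on $p_2$ therefore brings us to $\underline{\vect}\bbX(2,2,2)$, where $T_1=E$ is the only Auslander bundle and is tilting. For the inductive step I would exhibit a decomposition
\[
T_1\;=\;\bar{\psi}_{J_1^c}(T')\,\oplus\,\bar{\psi}_1(T'')
\]
for suitably chosen tilting $T'\in\underline{\vect}\bbX(2,p_2,2)$ and $T''\in\underline{\vect}\bbX(2,p_2,p_3-1)$, and then verify condition~(2) of Theorem~\ref{tilting in recollement}, namely $\Hom(T'',\bar{\psi}^2\bar{\psi}_{J_1^c}(T')[n])=0$ for $n\ne 0$, by reducing $\bar{\psi}^2\bar{\psi}_{J_1^c}(T')$ to an object of $\mathrm{add}(T'')$ using Corollaries~\ref{extension bundles 2 reduction functor} and~\ref{extension bundles 2 insertion functor} and tracking the normal-form coefficient $k_3$ of $i\overline{x}_2+j\overline{x}_3$ across $\bbL(2,p_2,2)$, $\bbL(2,p_2,p_3-1)$, and $\bbL(2,p_2,p_3)$. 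The argument for $T_2$ is entirely parallel, with $\overline{x}_1$ in place of $\overline{x}_2$.

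The main obstacle is the bookkeeping. Unlike the extension-bundle case of Proposition~\ref{CubTilting}, the insertion functor $\bar{\psi}_1$ does not preserve Auslander bundles: by Corollary~\ref{extension bundles 2 insertion functor}, a summand $E(\vec{y})$ with $k_3=p_3-2$ in $\bbL(2,p_2,p_3-1)$ is sent to the extension bundle $E\langle\vec{x}_3\rangle(\vec{y})$. Hence $\bar{\psi}_1(T_1^{(p_3-1)})$ is not a clean subsum of $T_1$, and the direct ``top layer plus previous'' split that worked for Proposition~\ref{CubTilting} must be refined: one has to fine-tune $T'$ and $T''$ so that the stray extension bundles produced by $\bar{\psi}_1$ from $T''$ are exactly those reintroduced by $\bar{\psi}_{J_1^c}$ from $T'$, and so that the resulting sum recovers the prescribed Auslander bundles of $T_1$. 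Identifying this refinement, confirming that the refined $T''$ remains tilting, and verifying that $\bar{\psi}^2\bar{\psi}_{J_1^c}(T')$ lands in $\mathrm{add}(T'')$ is where the real technical content of the proof lies.
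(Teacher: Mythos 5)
Your skeleton (induction on $p_3$, gluing via the $q=1$ recollement of Theorem \ref{main theorem for stab cat}, first settling weight type $(2,2,p)$ and then $(2,p_2,p_3)$ by permuting weights) is exactly the paper's, and you correctly identify the obstacle: $\bar{\psi}_1$ turns Auslander bundles with $k_3=p_3-2$ into extension bundles, so the naive split from Proposition \ref{CubTilting} breaks down. But your write-up stops precisely there: you say you ``would exhibit'' a decomposition $T_1=\bar{\psi}_{J_1^c}(T')\oplus\bar{\psi}_1(T'')$ for ``suitably chosen'' $T'$, $T''$ and admit that finding them is where the real content lies. That is the gap. The paper resolves it by two concrete devices you do not supply: (i) it does not glue $T_1$ itself but a twist of it, proving that $T_{(2,2,p)}=\bigoplus_{i=0}^{p-2}E(i\overline{x}_2+(p-2)\vec{x}_3)$ is tilting --- the twist by $(p-2)\vec{x}_3$ is chosen so that every summand of the inductive tilting object $T''=T_{(2,2,p-1)}$ has third normal-form coefficient $k_3\leq p-3$ and hence is genuinely fixed by $\bar{\psi}_1$; and (ii) it takes $T'=E(p\vec{x}_1-\vec{x}_2-\vec{c})$ in $\underline{\vect}\bbX(2,2,2)$, for which $\bar{\psi}_{J_1^c}(T')=E\langle(p-2)\vec{x}_3\rangle(p\vec{x}_1-\vec{x}_2-\vec{c})$ is identified with the one missing Auslander summand $E((p-2)\overline{x}_2+(p-3)\vec{x}_3)$; the final tilting object for $T_1$ is then recovered from $T_{(2,2,p)}$ by untwisting, since line-bundle twist is an autoequivalence. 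Without these choices (and their analogues in the $(2,p_2,p_3)$ step) the induction does not close.

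A second, more pointed problem: your plan to verify condition (2) of Theorem \ref{tilting in recollement} by showing $\bar{\psi}^2\bar{\psi}_{J_1^c}(T')\in\mathrm{add}(T'')$ does not match what actually happens and would likely fail. In the paper's computation $\bar{\psi}^2\bar{\psi}_{J_1^c}(T'_p)=E((p-2)\vec{x}_1-2\vec{x}_3)$, which is \emph{not} a direct summand of $T_{(2,2,p-1)}=\bigoplus_{i=0}^{p-3}E(i\overline{x}_2+(p-3)\vec{x}_3)$; the vanishing $\Hom(T'',\bar{\psi}^2\bar{\psi}_{J_1^c}(T')[n])=0$ for $n\neq 0$ is instead checked directly from the Hom-formulas for Auslander/extension bundles in \cite[Corollary 4.14 and Proposition 6.8]{[KLM1]}. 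So, unlike Proposition \ref{CubTilting}, the Auslander-bundle case cannot be closed with Corollary \ref{tilting in recollement for add} alone; you need the genuine Hom computation, which your proposal neither performs nor cites a tool for. Your base-case reduction $(2,p_2,2)\cong(2,2,p_2)$ by permuting weights is fine and is also how the paper proceeds implicitly.
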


\begin{proof}
First, we prove that $T_{(2,2,p)}=\oplus_{0\leq i\leq p-2}E(i\overline{x}_2+(p-2)\vec{x}_3)$ is a tilting object in $\underline{\vect}\bbX(2,2,p)$ by induction on $p$.
In fact, it is clear that $E(\vec{x})$ is a tilting object in $\underline{\vect}\bbX(2, 2, 2)$ for each $\vec{x}\in\mathbb{L}(2,2,2)$. For $p\geq3$, we consider the ladder in Figure \ref{LadderFor22p}. By Proposition \ref{extension bundles insertion functor}, we obtain
$$\bar{\psi}_1(T_{(2,2,p-1)})=\oplus_{0\leq i\leq p-3} E(i\overline{x}_2+(p-3)\vec{x}_3).$$
In $\underline{\vect}\bbX(2,2,2)$, we take $T'_p:=E(p\vec{x}_1-\vec{x}_2-\vec{c})$ as the tilting object. Then
$${\bar{\psi}_{J_{1}^c}}(T'_p)=\bar{\psi}_{p-1}\bar{\psi}_{p-2}\cdots\bar{\psi}_{2}(T'_p)=E\langle(p-2)\vec{x}_3\rangle(p\vec{x}_1-\vec{x}_2-\vec{c})=E\big((p-2)\overline{x}_2+(p-3)\vec{x}_3\big).$$
By Corollary \ref{extension bundles 2 reduction functor}, we have
\begin{align*}
  &\Hom_{\underline{\vect}\bbX(2,2,p-1)}(T_{(2,2,p-1)}, \bar{\psi}^{2}\bar{\psi}_{J_{1}^c}(T'_p)[n])\\
  =&\bigoplus_{i=0}^{p-3}\Hom_{\underline{\vect}\bbX(2,2,p-1)}\big(E(i\overline{x}_2+(p-3)\vec{x}_3),E((p-2)\vec{x}_1-2\vec{x}_3)[n]\big).
\end{align*}
According to \cite[Corollary 4.14 and Proposition 6.8]{[KLM1]}, this hom-set is non-zero if and only if
$$[(p-2+n)\vec{x}_1-2\vec{x}_3]-[i\overline{x}_2+(p-3)\vec{x}_3]\in\{0,\overline{x}_1,\overline{x}_2,\overline{x}_3\}$$ holds for some $0\leq i\leq p-3$. It is ipmossible for $n\neq 0$. Therefore, the hom-set is zero for all $n\neq0$. Combining with Theorem \ref{tilting in recollement}, one has $$T_{(2,2,p)}={\bar{\psi}_{J_{1}^c}}(T'_p)\oplus\bar{\psi}_1(T_{(2,2,p-1)})$$ is a tilting object by recursion. Hence, $\oplus_{0\leq i\leq p-2}E(i\overline{x}_2)$ is also a tilting object in $\underline{\vect}\bbX(2,2,p)$.

Next, we consider the ladder for the weight $(2,p_2,p_3)$ as in Figure \ref{LadderFor2p1p2}. As we proved above, $\oplus_{j=0}^{p_2-2}E\big(j\overline{x}_3+(p_3-3)\vec{x}_1\big)$ is a tilting object in $\underline{\vect}\bbX(2,p_2,2)$. Note that
$${\bar{\psi}_{J_{1}^c}}[E\big(j\overline{x}_3+(p_3-3)\vec{x}_1\big)]=E\langle(p-2)\vec{x}_3\rangle\big(j\overline{x}_3+(p_3-3)\vec{x}_1\big)=E\big(j\overline{x}_3+(p_3-2)\overline{x}_2+(p-3)\vec{x}_3\big).$$
In $\underline{\vect}\bbX(2,p_2,p_3-1)$, We take $\oplus_{i=0}^{p_3-3}\oplus_{j=0}^{p_2-2}E\big(i\overline{x}_2+j\overline{x}_3+(p_3-3)\vec{x}_3\big)$ as the tilting object which preserves fixed under the functor $\bar{\psi}_1$. We also can check that the hom-set
$$\Hom_{\underline{\vect}\bbX(2,2,p-1)}\big(\bigoplus_{j=0}^{p_2-2}E(j\overline{x}_3+(p_3-3)\vec{x}_1), \bar{\psi}^{2}\bar{\psi}_{J_{1}^c}(\bigoplus_{i=0}^{p_3-3}\bigoplus_{j=0}^{p_2-2}E(i\overline{x}_2+j\overline{x}_3+(p_3-3)\vec{x}_3))[n]\big)$$
is zero for all $n\neq0$ by \cite[Corollary 4.14 and Proposition 6.8]{[KLM1]}. Hence, $\oplus_{0\leq i\leq p_3-2, 0\leq j\leq p_2-2} E(i\overline{x}_2+j\overline{x}_3)$ is a tilting object in $\underline{\vect}\bbX(2, p_2, p_3)$.

Finally, by the similar way, we can prove that $\oplus_{0\leq i\leq p_3-2, 0\leq j\leq p_2-2} E(i\overline{x}_1+j\overline{x}_3)$ is a tilting object in $\underline{\vect}\bbX(2,p_2,p_3)$
\end{proof}

\subsection{New classes of tilting objects in the stable categories of vector bundles}

Based on Theorem \ref{tilting in recollement} and Theorem \ref{main theorem for stab cat} , we construct some ``new'' tilting objects in $\underline{\vect}\bbX(2, p_2, p_3)$.

\begin{thm}\label{newtiltingobject}
Let $q$ be an integer with $1\leq q\leq p_3-2$. For $k=1,2$, denote by
$$T_{1k}=\bigoplus_{i=0}^{q-1}\bigoplus_{j=0}^{p_2-2}E(i\overline{x}_k+j\overline{x}_3-\vec{x}_3+\vec{c})$$
and
$$T_{2k}=\bigoplus_{j=0}^{p_2-2}[E\langle q\vec{x}_3\rangle\big(j\overline{x}_3-(q+1)\vec{x}_3+\vec{c}\big)\bigoplus\bigoplus_{i=1}^{p_3-q-2}E\big(i\overline{x}_k+j\overline{x}_3-(q+1)\vec{x}_3+\vec{c}\big)]$$
in $\underline{\vect}\bbX(2, p_2, p_3)$. Then $T_{1i}\oplus T_{2j}, 1\leq i, j\leq 2$ are tilting objects in $\underline{\vect}\bbX(2, p_2, p_3)$.
\end{thm}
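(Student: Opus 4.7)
The plan is to apply Theorem \ref{tilting in recollement} to a recollement extracted from the infinite ladder of Theorem \ref{main theorem for stab cat}. Setting $\bbY$ to have weight type $(2, p_2)$ and $p = p_3$, together with $J_q = (1, 2, \ldots, q)$ and $J_q^c = (q+1, q+2, \ldots, p_3 - 1)$, three consecutive rows yield a recollement
\[
\underline{\vect}\bbX(2, p_2, q+1) \xrightarrow{\bar{\psi}_{J_q^c}} \underline{\vect}\bbX(2, p_2, p_3) \xrightarrow{\bar{\psi}^{J_q}} \underline{\vect}\bbX(2, p_2, p_3 - q),
\]
with $i_* = \bar{\psi}_{J_q^c}$, $j^* = \bar{\psi}^{J_q}$ and $j_* = \bar{\psi}_{J_q}$. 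Since the ladder continues upward, $j_*$ admits a right adjoint $j^\sharp$, supplying the hypothesis of Theorem \ref{tilting in recollement}.

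By Proposition \ref{DRTilting}, applied to the weight types $(2, p_2, q+1)$ and $(2, p_2, p_3-q)$, for each $k\in\{1,2\}$ the objects
\[
T'_k := \bigoplus_{a=0}^{q-1} \bigoplus_{b=0}^{p_2-2} E(a\overline{x}_k + b\overline{x}_3), \quad T''_k := \bigoplus_{a=0}^{p_3-q-2} \bigoplus_{b=0}^{p_2-2} E(a\overline{x}_k + b\overline{x}_3)
\]
are tilting in $\underline{\vect}\bbX(2, p_2, q+1)$ and $\underline{\vect}\bbX(2, p_2, p_3-q)$, respectively. The first main step is to show $i_*(T'_i) \cong T_{1i}$ and $j_*(T''_m) \cong T_{2m}$ in $\underline{\vect}\bbX(2, p_2, p_3)$, for each $i, m \in \{1, 2\}$. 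This proceeds by iterating Proposition \ref{extension bundles insertion functor} and Corollary \ref{extension bundles 2 insertion functor}: pass each Auslander summand through its sequence of insertions, tracking how the normal-form coefficient of $\vec{x}_3$ evolves and whether the insertion index coincides with the critical value $p-k_3$ triggering a transition to an extension bundle. The summands that remain non-critical throughout yield the Auslander part of $T_{1i}$ (with the overall twist $-\vec{x}_3+\vec{c}$) and the Auslander part of $T_{2m}$ (with twist $-(q+1)\vec{x}_3+\vec{c}$); the extension-bundle summands $E\langle q\vec{x}_3\rangle(b\overline{x}_3 - (q+1)\vec{x}_3 + \vec{c})$ (one per $b$) arise from those summands of $T''_m$ whose critical index is hit $q$ times during the sequence $\bar\psi_1,\bar\psi_2,\ldots,\bar\psi_q$.

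Granting these identifications, Theorem \ref{tilting in recollement} reduces the conclusion to the Hom-vanishing
\[
\Hom_{\underline{\vect}\bbX(2, p_2, p_3-q)}(T''_m,\, j^\sharp i_*(T'_i)[n]) = 0 \quad \text{for } n \neq 0.
\]
Because $i_*(T'_i) = T_{1i}$ consists only of Auslander bundles, Corollary \ref{extension bundles 2 reduction functor} implies that the reduction-type functor $j^\sharp$ returns, summand by summand, either zero, a direct sum of two line bundles (vanishing in $\underline{\vect}$), or a shifted Auslander bundle. The residual Hom-computation between Auslander bundles in $\underline{\vect}\bbX(2, p_2, p_3 - q)$ is then controlled by the explicit formulas of \cite[Corollary 4.14 and Proposition 6.8]{[KLM1]}, exactly as in the proof of Proposition \ref{DRTilting}. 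The main obstacle is the identification carried out in the previous paragraph: unlike Propositions \ref{CubTilting} and \ref{DRTilting}, $T_{2m}$ contains the genuine extension bundle $E\langle q\vec{x}_3\rangle$, so the simpler $\mathrm{add}$-containment criterion of Corollary \ref{tilting in recollement for add} does not apply, and one must carry out the bookkeeping of critical insertion indices summand-by-summand and case-by-case on $k\in\{1,2\}$.
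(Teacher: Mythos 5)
Your overall strategy is the same as the paper's: extract the recollement with $i_*=\bar{\psi}_{J_q^c}$, $j^*=\bar{\psi}^{J_q}$, $j_*=\bar{\psi}_{J_q}$, $j^\sharp=\bar{\psi}^{J_q+\underline{1}}$ from the ladder of Theorem \ref{main theorem for stab cat}, take tilting objects in the two outer categories coming from Proposition \ref{DRTilting}, identify their images under the insertion functors with $T_{1k}$ and $T_{2k}$, and then verify condition (2) of Theorem \ref{tilting in recollement} via the Hom-formulas of \cite{[KLM1]}. However, there is a genuine error in your key identification step: with the \emph{untwisted} objects $T'_k=\bigoplus_{a,b}E(a\overline{x}_k+b\overline{x}_3)$ and $T''_k=\bigoplus_{a,b}E(a\overline{x}_k+b\overline{x}_3)$, the claims $i_*(T'_i)\cong T_{1i}$ and $j_*(T''_m)\cong T_{2m}$ are false. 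Concretely, since $\overline{x}_3=\vec{x}_1-\vec{x}_2$ (using $2\vec{x}_1=\vec{c}$), the summands $E(b\overline{x}_3)$ of $T'_k$ (the $a=0$ slice) have $\vec{x}_3$-coefficient $k_3=0$ in normal form, so the very first insertion $\bar{\psi}_{q+1}$ hits the critical index $(q+1)-k_3=q+1$ and, by Corollary \ref{extension bundles 2 insertion functor}, produces $E\langle\vec{x}_3\rangle(b\overline{x}_3)$; iterating Proposition \ref{extension bundles insertion functor} along $J_q^c$ yields $E\langle(p_3-q-1)\vec{x}_3\rangle(b\overline{x}_3)$. Hence $\bar{\psi}_{J_q^c}(T'_k)$ contains genuine extension bundles and cannot equal the purely Auslander object $T_{1k}$; a similar mismatch occurs for $\bar{\psi}_{J_q}(T''_m)$ versus $T_{2m}$, where with your choice the extension-bundle summands are created from the wrong slice (e.g.\ $a=1$ rather than $a=0$) and carry the wrong twists. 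Your argument would therefore prove tilting for a different object than the one in Theorem \ref{newtiltingobject}.

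The missing idea is the paper's choice of \emph{twisted} preimages: one must start from
$T'_{1k}=\bigoplus_{i=0}^{q-1}\bigoplus_{j=0}^{p_2-2}E(i\overline{x}_k+j\overline{x}_3+q\vec{x}_3)$ in $\underline{\vect}\bbX(2,p_2,q+1)$ and
$T'_{2k}=\bigoplus_{i=0}^{p_3-q-2}\bigoplus_{j=0}^{p_2-2}E\big(i\overline{x}_k+j\overline{x}_3+(p_3-q-1)\vec{x}_3\big)$ in $\underline{\vect}\bbX(2,p_2,p_3-q)$, which are still tilting (they are line-bundle twists of the objects of Proposition \ref{DRTilting}). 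With these twists the $\vec{x}_3$-coefficients are arranged so that along $J_q^c$ no insertion is critical (each step only adds $\vec{x}_3$), giving $\bar{\psi}_{J_q^c}(T'_{1k})=T_{1k}$, while along $J_q$ exactly the $i=0$ summands are critical at every one of the $q$ steps, producing the $E\langle q\vec{x}_3\rangle$-summands and giving $\bar{\psi}_{J_q}(T'_{2k})=T_{2k}$. Once the source objects are corrected, the remainder of your plan (computing $\bar{\psi}^{J_q+\underline{1}}\bar{\psi}_{J_q^c}(T'_{1t})$ by iterated reductions, with line-bundle summands vanishing in the stable category, and checking the Hom-vanishing for $n\neq 0$ case by case via \cite[Corollary 4.14 and Proposition 6.8]{[KLM1]}) agrees with the paper's proof; your observation that the $\mathrm{add}$-criterion of Corollary \ref{tilting in recollement for add} is insufficient here is also consistent with the paper.
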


\begin{proof}
Let $$T'_{1k}=\bigoplus_{i=0}^{q-1}\bigoplus_{j=0}^{p_2-2}E(i\overline{x}_k+j\overline{x}_3+q\vec{x}_3)
\;\;\textup{and}\;\;
T'_{2k}=\bigoplus_{i=0}^{p_3-q-2}\bigoplus_{j=0}^{p_2-2}E\big(i\overline{x}_k+j\overline{x}_3+(p_3-q-1)\vec{x}_3\big).$$
By Proposition \ref{DRTilting}, we know that both $T'_{11}$ and $T'_{12}$ (resp. $T'_{21}$ and $T'_{22}$) are tilting object in $\underline{\vect}\bbX(2,p_2,q+1)$ (resp. $\underline{\vect}\bbX(2,p_2,p_3-q)$).

Denote by $J_q=(1,2,\cdots,q)$. According to Theorem \ref{main theorem for stab cat}, we have the following ladder.
\begin{figure}[ht]
$$\xymatrix@C=0.5cm{
  \underline{\vect}\bbX(2,p_2,q+1)
\ar@/_3pc/[rrr]_{\vdots}|{\bar{\psi}_{J_{q}^c+\underline{1}}}
\ar@/^3pc/[rrr]^{\vdots}|{\bar{\psi}_{J_{q}^c-\underline{1}}}
\ar[rrr]|{\bar{\psi}_{{J_{q}^c}}}
&&& \underline{\vect}\bbX(2,p_2,p_3)
\ar@/_1.5pc/[lll]|{\bar{\psi}^{{J_{q}^c}}}
\ar@/^1.5pc/[lll]|{\bar{\psi}^{J_q^c+{\underline{1}}}}
\ar[rrr]|{\bar{\psi}^{J_{q}}}
\ar@/_3pc/[rrr]_{\vdots}|{\bar{\psi}^{{J_{q}}+{\underline{1}}}}
\ar@/^3pc/[rrr]^{\vdots}|{\bar{\psi}^{{J_{q}}-{\underline{1}}}}
&&& \underline{\vect}\bbX(2,p_2,p_3-q).
\ar@/_1.5pc/[lll]|{\bar{\psi}_{{J_{q}}-{\underline{1}}}}
\ar@/^1.5pc/[lll]|{\bar{\psi}_{J_{q}}}
}$$
\end{figure}

By Proposition \ref{extension bundles insertion functor}, we have
$$\bar{\psi}_{J_q^c}(T'_{1k})=T_{1k},\;\;\bar{\psi}_{J_q}(T'_{2k})=T_{2k}$$
for $k=1,2$. Using the result of Hom-sets between extension bundles in \cite{[KLM1]}, it is easy to check that
$$\Hom_{\underline{\vect}\bbX(2,p_2,p_3-1)}(T'_{2s}, \bar{\psi}^{J_q+\underline{1}}{\bar{\psi}_{J_{q}^c}}(T^{\prime}_{1t})[n])=0$$
for all $s,t\in\{1,2\}$ and all $n\neq 0$ case by case. Combining with Theorem \ref{tilting in recollement}, we get ${\bar{\psi}_{J_{q}^c}}(T^{\prime}_{1s})\oplus {\bar{\psi}_{J_{q}}}(T'_{2t})$ is a tilting object in $\underline{\vect}\bbX(2, p_2, p_3)$ for all $s,t\in\{1,2\}$.
\end{proof}

The following example, we construct the endomorphism algebra for a tilting object in Theorem \ref{newtiltingobject}.

\begin{exm}
In Proposition \ref{newtiltingobject}, we take $p_2=3$, $p_3=4$ and $q=1$. Then we have
$$[T_{11}\oplus T_{22}](-2\vec{x}_3)=E(\vec{x}_3)\oplus E(\overline{x}_3+\vec{x}_3)\oplus E(\overline{x}_1)\oplus E(\overline{x}_2)\oplus E\langle\vec{x}_3\rangle\oplus  E\langle\vec{x}_3\rangle(\overline{x}_3)$$
is a tilting object in $\underline{\vect}\bbX(2,3,4)$ whose endomorphism algebra is the quiver algebra associated to the quiver as following.
$$\xymatrix{E(\vec{x}_3)\ar[d]&E\langle\vec{x}_3\rangle\ar[d]\ar[r]\ar[l]&E(\overline{x}_2)\ar[d]\\
E(\vec{x}_3+\overline{x}_3)&E\langle\vec{x}_3\rangle(\overline{x}_3)\ar[r]\ar[l]&E(\overline{x}_1)}$$
\end{exm}

\noindent {\bf Acknowledgements.}\quad
First and foremost, we extend our sincere gratitude to the Tianyuan Mathematics Center of Southeast China for its gracious hospitality and support. The first author was supported by the National Natural Science Foundation of China (Grant No. 12301054). The second author was supported by the National Natural Science Foundation of China (Grant No. 12501046). The authors are grateful to Shiquan Ruan for his continuous discussions and valuable comments.

\bibliographystyle{amsplain}

\vskip 5pt
\noindent {\tiny  \noindent Qiang Dong, Hongxia Zhang\\
School of Mathematics and Statistics, \\
 Fujian Normal University, Fuzhou, 350117,
Fujian, PR China.\\
E-mail: dongqiang@fjnu.edu.cn, hxzhangxmu@163.com\\}
\vskip 3pt
\end{document}